\documentclass[reqno]{amsart}
\usepackage{amsfonts, amsmath, amsthm, amssymb, latexsym, graphicx}

\theoremstyle{plain}
\newtheorem{theorem}{Theorem}[section]
\newtheorem{corollary}[theorem]{Corollary}
\newtheorem{lemma}[theorem]{Lemma}
\newtheorem{proposition}[theorem]{Proposition}

\theoremstyle{definition}
\newtheorem{definition}[theorem]{Definition}
\theoremstyle{remark}
\newtheorem{remark}[theorem]{Remark}
\newtheorem{remarks}[theorem]{Remarks}

\numberwithin{equation}{section}

\title[Sonine formulas and  intertwining operators in Dunkl theory]{Sonine formulas and  intertwining operators  in Dunkl theory}
\author{Margit R\"osler}
\address{Insitut f\"ur Mathematik, Universit\"at Paderborn, Warburger Str. 100, D-33098 Paderborn, Germany}
\email{roesler@math.upb.de}
\author{Michael Voit}
\address{Fakult\"at Mathematik, Technische Universit\"at Dortmund,
          Vogelpothsweg 87,
          D-44221 Dortmund, Germany}
\email{michael.voit@math.tu-dortmund.de}

\subjclass[2000]{Primary 33C67; Secondary 33C52}
\keywords{Dunkl operators, intertwining operator, Bessel functions, Selberg integral, hypergeometric functions associated with root systems, Heckman-Opdam polynomials}

\begin{document}
\date{\today}

\begin{abstract} 
Let $V_k$ denote Dunkl's intertwining operator associated with some root system $R$ and multiplicity  $k$. For two multiplicities $k, k^\prime$ on $R$, we study the intertwiner $V_{k^\prime,k} = V_{k^\prime}\circ V_k^{-1}$  between Dunkl operators with multiplicities $k$ and $k^\prime.$ It has been a long-standing  conjecture that  $V_{k^\prime,k}$ is  positive if $k^\prime \geq k \geq 0.$ We disprove this conjecture by constructing counterexamples for root system $B_n$. This matter is closely related to the existence of Sonine-type integral representations between Dunkl kernels and Bessel functions with different multiplicities. In our examples, such Sonine formulas do not exist. As a consequence, we obtain necessary conditions on Sonine formulas for Heckman-Opdam hypergeometric functions of type $BC_n$ and conditions for positive branching coefficients between  multivariable Jacobi polynomials.
\end{abstract}

\maketitle

\section{Introduction}

In the theory of rational Dunkl operators initiated by C.F. Dunkl in \cite{D1, D2},
 the intertwining operator plays a significant role. This operator intertwines Dunkl operators with the usual partial derivatives on some Euclidan space. To become more precise, let $R$ be a (not necessarily crystallographic) root system in a 
 finite-dimensional Euclidean space 
 $(\frak a, \langle\,.\,,\,.\,\rangle)$ with  finite Coxeter group $W,$  and fix a   $W$-invariant function $k:R \to \mathbb C$ (called multiplicity function) with 
 $\text{Re}\, k \geq 0$.  Denote by $\{T_\xi(k), \xi\in \frak a\}$ the associated commuting family of rational Dunkl operators. The 
 intertwining operator $V_k$ is then characterized as the unique isomorphism on the vector space $\mathcal P=\mathbb C[a]$  
  of polynomial functions on $\frak a$ which preserves the degree of homogeneity and satisfies 
$$ V_k(1)=1, \quad T_\xi(k) V_k = V_k \partial_\xi \quad \text{ for all } 
  \xi \in \frak a;$$ c.f. \cite{DJO}. The Dunkl kernel $E_k$ associated with $R$ and $k$, which solves the joint eigenvalue problem for the $T_\xi(k)$ 
  and generalizes the usual exponential kernel,  can be represented by means of the intertwiner $V_k$ as
  $ E_k(x,z) = V_k\bigl(e^{\langle \,.\,,  z\rangle}\bigr)(x)$ for all $x\in \frak a $ and $z\in \frak a_{\mathbb C},$
  where $\frak a_{\mathbb C}$ denotes the complexification of $\frak a.$ 

 For nonnegative multiplicities 
 $k\geq 0$, it was shown in \cite{R1} that $V_k$ is  positive on $\mathcal P,$ i.e. for $p\in \mathcal P$ with
 $p\geq 0$ on $\frak a$, it follows that $V_kp\geq 0$ on $\frak a.$ Further, for each $x\in \frak a$ there exists a unique  probability measure 
$\mu_{x}^k$ on $\frak a$ such that 
\begin{equation}\label{Dunkl_kernel_int} E_k(x,z) = \int_{\frak a} e^{\langle \xi,z\rangle } d\mu_x^k(\xi), \quad \forall \, 
x \in \frak a, z\in \frak a_{\mathbb C}.
\end{equation}
The representing measure $\mu_x^k$ is compactly supported 
with $\text{supp}\,\mu_x^k \subseteq\text{co}(W.x)$, the convex hull of 
the $W$-orbit of $x$. 
Formula \eqref{Dunkl_kernel_int}  generalizes the Harish-Chandra integral representation for the spherical functions  of a symmetric space of Euclidean type.
Indeed, for 
certain half-integer valued multiplicities $k$, the Bessel functions 
$$J_k(x,z) = \frac{1}{|W|}\sum_{w\in W} E_k(wx,z), \quad z\in \frak a_{\mathbb C},$$
can be interpreted as the spherical functions of a Cartan motion group, where $R$ and $k$ are determined by the root space data of the 
underlying symmetric space, see \cite{O, dJ2} for details. 
In these geometric cases, the integral formula for $J_k$ obtained from
\eqref{Dunkl_kernel_int} by taking $W$-means is a direct consequence of the Harish-Chandra formula  together with 
Kostant's convexity theorem \cite[Propos. IV.4.8 and Theorem IV.10.2]{Hel}.

\medskip

In this paper, we shall consider  two multiplicities $k, k^\prime$ on $R$ 
with  $k^\prime \geq k\geq 0$ (i.e., 
$k^\prime(\alpha) \geq k(\alpha)\geq 0 \, \forall \alpha \in R$) and study the operator 
$$ V_{k^\prime\!,k} := V_{k^\prime} \circ V_k^{-1} .$$
Notice that $V_{k^\prime,0} = V_{k^\prime}.$ The operator $V_{k^\prime\!,k}$ intertwines the 
Dunkl operators  with multiplicities $k$ and $k^\prime$, 
$$ T_\xi(k^\prime) V_{k^\prime\!,k} = V_{k^\prime\!,k}\, T_\xi(k) \quad \text{ for all } \xi \in \frak a.$$
It has been a long-standing conjecture that $V_{k^\prime\!, k}$ is also positive on polynomials, which is (as will be explained in Section \ref{intertwining}) equivalent to the statement that for each $x\in \frak a$, there exists
a compactly supported probability measure $\mu_x^{k^\prime\!,k}$ on $\frak a$ such that 
\begin{equation}\label{Sonine_E}  E_{k^\prime}(x,z) = \int_{\frak a} E_k(\xi,z)\, d\mu_x^{k^\prime\!,k}(\xi)
\quad \text{for all } z\in \frak a_{\mathbb C}.
\end{equation}
Note that $\eqref{Sonine_E}$ implies an analogous formula for the Bessel function:
\begin{equation}\label{Sonine_B}  J_{k^\prime}(x,z) = \int_{\frak a} J_k(\xi,z) \,d\widetilde \mu_x^{k^\prime\!,k}(\xi) \quad (z\in \frak a_{\mathbb C})
 \end{equation}
with some $W$-invariant probability measures $\widetilde \mu_x^{k^\prime\!,k}.$

\medskip

In the rank-one case with 
$R= \{\pm 1\}\subset \mathbb R$, one has
$\, J_k(x,y) = j_{k-1/2}(ixy) \,$
with the (modified) one-variable Bessel function
\begin{equation}\label{B_eindim} j_\alpha(z)  = \, _0F_1(\alpha+1;-z^2/4) \quad\quad 
(\alpha\in\mathbb C\setminus\{-1,-2,\ldots\}).\end{equation}
In this case, formula \eqref{Sonine_B} is just the classical 
Sonine formula  (\cite[formula (3.4)] {A2}):
\begin{equation}\label{intrep-1-dim}
j_{\alpha+\beta}(z)= 2\frac{\Gamma(\alpha+\beta+1)}{\Gamma(\alpha+1)\Gamma(\beta)}
\int_{0}^1j_{\alpha}(zx)  x^{2\alpha+1}(1-x^2)^{\beta-1} dx
\end{equation}
for all $\alpha, \beta \in \mathbb R$ with $\alpha >-1$ and $\beta >0.$

In the rank-one case also the operator $V_{k^\prime\!, k}$ with $k^\prime > k \geq 0$  is known to be positive. Indeed, 
Y. Xu obtained in \cite{X} 
an explicit positive integral representation for $V_{k^\prime\!,k}$  which leads to a positive 
Sonine-type representation  for 
the rank-one 
Dunkl kernel, see Remark \ref{Sonine_Soltani} for details.

In the present paper, we shall construct examples which reveal that the above positivity conjecture  is not true in 
general. Our examples are related to root system 
$$ B_n = \{ \pm e_i, \, \pm e_i \pm e_j, \, 1\leq i < j \leq n\} \subset \mathbb R^n$$
with $n\geq 2$, where  multiplicities are denoted as $k=(k_1,k_2),$ with $k_1$ and $k_2$ the values of $k$ on $e_i$ and $e_i\pm e_j$, respectively. 
We prove that for $k= (k_1, k_2)$ with $k_1\geq 0, k_2 >0$ and $k^\prime = k^\prime(h)= (k_1+h, k_2)$ with $\,h>-k_1$, the Bessel function 
$J_{k^\prime(h)}^B$ 
of type $B_n$ cannot have a positive Sonine representation with respect to $J_k^B$ if $h$ is not contained in the set
$$ \Sigma(k_2):= \{r\in \mathbb R: r > k_2(n-1)\} \,\cup \, \{ jk_2-m\,: j = 0, 1, \ldots n-1; m= 0, 1, 2\ldots \}.$$
This implies that for  $h \notin \Sigma(k_2),$ the intertwining operator 
$V_{k^\prime(h)\!,k}$ is not positive.
More generally, we shall consider also complex multiplicities and obtain similar conditions for Sonine representations with 
complex bounded Radon measures.

The proof of our main  result, which is contained  in Corollary \ref{main_Bessel}, is based on the fact
that the Bessel function of type $B_n$ can be expressed as a multivariable $\,_0F_1$-hypergeometric function 
in the sense of \cite{K} (see also \cite{BF1}).  Via Kadell's \cite{Ka} generalization of the Selberg integral one obtains an explicit Sonine formula for this 
hypergeometric function and 
therefore also for the Bessel function  $J_{k^\prime(h)}^B$ in terms of $J_k^B$ within the range $\text{Re}\, h > k_2(n-1).$ 
This explicit formula allows a distributional extension to a larger range of the parameter $h$, 
 which is based on results of \cite{dJ2} for the intertwiner $V_k$. 
Employing arguments 
of Sokal \cite{S} for the characterization of Riesz distributions on symmetric cones, we then obtain necessary conditions on $h$ under which
our distributional Sonine formulas can actually be given by positive or complex measures. 
Indeed, the set $\Sigma(k_2)$ is similar to the so-called Wallach set, which describes 
those Riesz distributions of a symmetric cone which are actually positive measures. Our counterexamples seem to be  specific for the $B_n$ case and concern only the multiplicities on the roots $\pm e_i$. We still conjecture that 
$V_{k^\prime, k}$ is positive for $k^\prime > k > 0$ in the $A_{n-1}$-case. 
We also mention that for Bessel functions on symmetric cones, Sonine formulas were recently studied in \cite{RV2}.

Our results on Sonine formulas in the rational Dunkl setting are contained  in Section \ref{B_n},  which is preceded by
preparations for intertwining operators in Section \ref{intertwining}. In Section \ref{trig},  we apply the results from Section \ref{B_n} to the trigonometric theory 
of Heckman, Opdam and Cherednik (see \cite{HS, O2}) which generalizes the spherical harmonic analysis on Riemannian symmetric spaces of the non-compact and compact type. 
 We shall use a well-known contraction procedure from the trigonometric to the rational case in order to derive 
necessary conditions on the existence of Sonine-type 
integral representations between 
hypergeometric functions and Heckman-Opdam polynomials (also called Jacobi polynomials) associated with root system $BC_n$ as well as 
the positivity of branching  coefficients  between two such polynomial systems with different multiplicities. These results are complemented by 
motivating examples in rank one and the case of symmetric spaces. Let us mention that in geometric cases, branching rules and Sonine-type formulas for Bessel functions were 
recently also studied in \cite{HZ} in connection with the geometry of moment mappings.

\section{Intertwining operators and Sonine formula for Dunkl kernels}\label{intertwining}

We start with some background and notation in rational Dunkl theory  supplementing the material in the introduction. For more information, 
the reader is referred to \cite{dJ, O, DJO, dJ2, DX}  and the references cited there. 
Again, $R$ is a root system in a finite-dimensional Euclidean space $(\frak a, \langle\,.\,,\,.\,\rangle)$  and  
$W=W(R)$ be
the associated finite Coxeter group.  We assume in this section that $R$ is reduced, but not necessarily crystallographic. Let 
$\,\mathcal K=\{k:R\to \mathbb C\,: \,  k \text{ is } W
\text{-invariant}\}$ 
denote the space of multiplicity functions on $R$.  
For two multiplicities $k, k^\prime\in \mathcal K$ we write $k^\prime \geq k$ ($\text{Re}\, k^\prime \geq \text{Re}\, k$) 
if $k^\prime(\alpha) \geq k(\alpha)$ 
($\text{Re}\, k^\prime (\alpha) \geq \text{Re}\,k(\alpha)$) for all $\alpha\in R.$ 
The Dunkl operators
 associated with $R$ and  $k\in \mathcal K$ are given by
 $$ T_\xi(k) = \partial_\xi  + \frac{1}{2}\sum_{\alpha\in R}
 k(\alpha) \langle \alpha, \xi\rangle \frac{1}{\langle \alpha,\,.\,\rangle}(1-\sigma_\alpha) , \quad \xi \in \frak a
 $$
 where the action of $W$ on functions $f: \frak a \to \mathbb C$  is given by $w.f(x) = f(w^{-1}x).$
 It was shown in \cite{D1} that the  $T_\xi(k), \, \xi \in \frak a$   commute. 
 A multiplicity $k$ is called regular if the joint kernel of the $T_\xi(k)$, considered as linear operators on 
 $\mathcal P=\mathbb C[\frak a],$ consists of the constants only.  This is equivalent to the existence of a (necessarily unique) intertwining operator 
 $V_k$ as described in the introduction. 
 The set $\mathcal K^{reg}$ of regular multiplicities is open in $\mathcal K$ and contains the set $\{k\in \mathcal K: \text{Re}\, k\geq 0\}$, see \cite{DJO}. 
 Moreover, for each $k\in \mathcal K^{reg}$ and $y \in \frak a_\mathbb C,$ there exists a unique solution $f = E_k(\,.\,,y)$ of the 
joint eigenvalue problem
$$ T_\xi(k)f = \langle \xi,y\rangle f\quad \forall \, \xi \in \frak a, \,\, f(0)=1.$$
The function $E_k$ is called the Dunkl kernel. The mapping $(k,x,y)\mapsto E_k(x,y)$ is analytic on $\mathcal K^{reg}\times \frak a_\mathbb C\times \frak a_\mathbb C$ 
and satisfies $ E_k(x,y) = E_k(y,x)$ as well as 
 $$   E_k(\lambda x,y) = E_k(x, \lambda y), \quad E_k(wx, wy) = E_k(x,y)\quad (\lambda \in \mathbb C, w \in W).$$ 
We shall from now on always assume that $\text{Re}\, k\geq 0$. In this case,  the following estimate for the Dunkl kernel is due to \cite{dJ}:
\begin{equation}\label{estimableitung} \vert  E_k(x,z)\vert \leq \sqrt{|W|}\,e^{\max_{w\in W}\langle wx, \text{Re}\, z\rangle} \quad \forall \, x\in \frak a, z \in \frak a_{\mathbb C}. \end{equation}

Denote by $\mathcal E(\frak a)$ the space $ C^\infty(\frak a)$ of smooth functions on $\frak a$, equipped with its usual 
Fr\'echet space topology. According to \cite{dJ2}, the operator $V_k$ (uniquely) extends to a homeomorphism of $\mathcal E(\frak a)$ retaining 
the intertwining property. Thus
$$ E_k(x,z) = V_k\bigl(e^{\langle \,.\, , z\rangle}\bigr)(x), \quad \forall \, x\in \frak a, z \in \frak a_{\mathbb C}.$$

We next recapitulate some facts from \cite{dJ}
about the Dunkl transform which was introduced in \cite{D3}.
Consider the (complex-valued) $W$-invariant weight
$$ \omega_k(x) = \prod_{\alpha \in R} |\langle\alpha, x\rangle|^{k(\alpha)}. $$
The Dunkl transform associated with $R$ and $k$  on $L^1(\frak a, |\omega_k|)$   is defined  by 
\[ \widehat f^{\,k}(\xi) = \int_{\frak a} f(x) E_k(x,-i\xi) \omega_k(\xi)d\xi, \quad \xi \in \frak a.\]
The Dunkl transform $\mathcal D_k: f\mapsto \widehat f^{\,k}$ is a homeomorphism of the Schwartz space $\mathcal S(\frak a)$ with inverse 
$$\mathcal D_k^{-1}f(x) = \frac{1}{c_k^2} \mathcal D_kf(-x), \quad  c_k = \int_{\frak a } e^{-|x|^2/2}\omega_k(x)dx.$$
Notice that $c_k\not=0$ by \cite[Cor. 4.17]{dJ}. Dunkl operators act continuously on $\mathcal S(\frak a)$ 
and therefore also on the space $\mathcal S^\prime(\frak a)$ 
of tempered distributions on $\frak a$, via
\begin{align}\label{Dunkl_dist} \langle T_\xi(k)u, \varphi\rangle := -\langle u, T_\xi(k) \varphi \rangle, \quad u \in\mathcal S^\prime (\frak a), \, \, \varphi \in \mathcal S(\frak a).\end{align}
Moreover, the Dunkl transform extends to a homeomorphism 
$ u \mapsto \widehat u^{\,k}$ of $\mathcal S^\prime(\frak a)$ by
$$ \langle \widehat u^{\,k}, \varphi\rangle:= \langle u, \widehat \varphi^{\, k}\rangle, \quad \varphi \in \mathcal S(\frak a).$$

For $R>0$ let 
$\,B_R(0):= \{x\in \frak a: |x|<R\}$ and $ \overline B_R(0):= \{x\in \frak a: |x|\leq R\},$ where $|\,.\,|$ denotes the norm associated with the given inner product. 
We shall use the following facts concerning the intertwiner $V_k$. 

\begin{proposition}\cite[Theorem 5.1]{dJ2}\label{deJeu}
\begin{enumerate}\itemsep=2pt
\item[\rm{(1)}] If $\varphi\in \mathcal E(\frak a)$ vanishes on $B_R(0),$ then also 
$V_k \varphi$ and $V_k^{-1}\!\varphi\, $ vanish on $B_R(0).$
\item[\rm{(2)}] Let $\varphi\in \mathcal S(\frak a).$ Then for all $x\in \frak a,$ 
\begin{enumerate}
\item[\rm{(a)}]
$\displaystyle V_k\varphi(x) =  \frac{c_k^2}{c_0^2} \mathcal D_k^{-1}(\omega_k^{-1} \mathcal D_0)\varphi(x)= 
\frac{1}{c_0^2}\! \int_{\frak a} \widehat \varphi^{\,0}(\xi) E_k(ix,\xi)\,d\xi.$
\item[\rm{(b)}] 
$ \displaystyle V_k^{-1} \!\varphi(x) = \frac{c_0^2}{c_k^2} \mathcal D_0^{-1}(\omega_k\mathcal D_k)\varphi(x) =
\,\frac{1}{c_k^2}\! \int_{\frak a} \widehat \varphi^{\,k}(\xi) e^{i\langle x,\xi\rangle}\,\omega_k(\xi)d\xi.$
\end{enumerate}
\end{enumerate}
\end{proposition}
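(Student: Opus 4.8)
The plan is to prove the transform identities in (2) first, as a direct consequence of Fourier and Dunkl inversion together with the characterizing property $V_k\bigl(e^{\langle\,\cdot\,,z\rangle}\bigr)=E_k(\,\cdot\,,z)$, and then to deduce the vanishing statement (1) by feeding the kernel estimate \eqref{estimableitung} into the Paley--Wiener--Schwartz theorem for the compactly supported distributions $f\mapsto V_k^{\pm1}f(x)$.

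To prove (2a), I would start from ordinary Fourier inversion, $\varphi=\frac1{c_0^2}\int_{\frak a}\widehat\varphi^{\,0}(\xi)\,e^{i\langle\,\cdot\,,\xi\rangle}\,d\xi$, and observe that, since $\widehat\varphi^{\,0}\in\mathcal S(\frak a)$ decays rapidly while $\partial_x^\beta e^{i\langle x,\xi\rangle}$ grows only polynomially in $\xi$ locally uniformly in $x$, this integral converges in the Fr\'echet topology of $\mathcal E(\frak a)$. As $V_k$ is a continuous linear operator on $\mathcal E(\frak a)$, it may be pulled inside the integral; using $V_k\bigl(e^{i\langle\,\cdot\,,\xi\rangle}\bigr)(x)=E_k(x,i\xi)=E_k(ix,\xi)$ (homogeneity) gives $V_k\varphi(x)=\frac1{c_0^2}\int_{\frak a}\widehat\varphi^{\,0}(\xi)E_k(ix,\xi)\,d\xi$, the integrand being bounded by \eqref{estimableitung} since $|E_k(ix,\xi)|=|E_k(x,i\xi)|\le\sqrt{|W|}$ (as $\mathrm{Re}(i\xi)=0$). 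Unwinding the definitions of $\mathcal D_0$, $\mathcal D_k^{-1}$ and $\omega_k$, together with the symmetry $E_k(\xi,ix)=E_k(ix,\xi)$, then identifies the right-hand side with $\frac{c_k^2}{c_0^2}\mathcal D_k^{-1}(\omega_k^{-1}\mathcal D_0)\varphi(x)$. Part (2b) is entirely symmetric: expand $\varphi$ by the Dunkl inversion $\varphi=\frac1{c_k^2}\int_{\frak a}\widehat\varphi^{\,k}(\xi)E_k(i\,\cdot\,,\xi)\,\omega_k(\xi)\,d\xi$ --- convergent in $\mathcal E(\frak a)$ by the standard bounds for the $x$-derivatives of $E_k$ against the rapid decay of $\widehat\varphi^{\,k}$ --- apply the continuous operator $V_k^{-1}$ termwise, and use $V_k^{-1}\bigl(E_k(\,\cdot\,,i\xi)\bigr)(x)=e^{i\langle x,\xi\rangle}$.

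For the $V_k$-part of (1), the functional $f\mapsto V_kf(x)$ is continuous on $\mathcal E(\frak a)$ and hence is represented by a compactly supported distribution $\mu_x^k\in\mathcal E'(\frak a)$ (which extends the probability measure of the introduction to complex $k$), whose Fourier--Laplace transform is $\widehat{\mu_x^k}(z)=\langle\mu_x^k,e^{\langle\,\cdot\,,z\rangle}\rangle=E_k(x,z)$. By \eqref{estimableitung}, $|E_k(x,z)|\le\sqrt{|W|}\,e^{\max_{w\in W}\langle wx,\mathrm{Re}\,z\rangle}\le\sqrt{|W|}\,e^{|x|\,|\mathrm{Re}\,z|}$, so $\widehat{\mu_x^k}$ is entire of exponential type at most $|x|$ with bounded growth in the purely imaginary directions; the Paley--Wiener--Schwartz theorem then forces $\mathrm{supp}\,\mu_x^k\subseteq\overline B_{|x|}(0)$. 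Consequently, if $\varphi$ vanishes on $B_R(0)$ and $|x|<R$, then $\overline B_{|x|}(0)\subseteq B_R(0)$ and $V_k\varphi(x)=\langle\mu_x^k,\varphi\rangle=0$. This argument uses only \eqref{estimableitung} and is therefore valid throughout the range $\mathrm{Re}\,k\ge0$.

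The $V_k^{-1}$-part of (1) I would attack by the same scheme: $\nu_x^k\colon f\mapsto V_k^{-1}f(x)$ again lies in $\mathcal E'(\frak a)$ with $\widehat{\nu_x^k}(z)=V_k^{-1}\bigl(e^{\langle\,\cdot\,,z\rangle}\bigr)(x)$, and once the bound $|\widehat{\nu_x^k}(z)|\le C(1+|z|)^N e^{|x|\,|\mathrm{Re}\,z|}$ is available, Paley--Wiener--Schwartz gives $\mathrm{supp}\,\nu_x^k\subseteq\overline B_{|x|}(0)$ and the conclusion follows as above. I expect this estimate to be \emph{the main obstacle}: in contrast to $E_k(x,z)=V_k\bigl(e^{\langle\,\cdot\,,z\rangle}\bigr)(x)$, the quantity $V_k^{-1}\bigl(e^{\langle\,\cdot\,,z\rangle}\bigr)(x)$ is not among the functions directly controlled by \eqref{estimableitung}. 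To produce it I would either run the inverse transform formula (2b) in the distributional sense and control the resulting oscillatory integral by a contour shift / Phragm\'en--Lindel\"of argument, or exhibit $e^{\langle\,\cdot\,,z\rangle}$ as a superposition of Dunkl kernels $E_k(\,\cdot\,,w)$ with $|\mathrm{Re}\,w|\le|\mathrm{Re}\,z|$ and apply $V_k^{-1}$ under the integral; both routes reduce matters to a genuine growth estimate for the inverse intertwiner rather than the bound \eqref{estimableitung} already in hand.
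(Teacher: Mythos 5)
The paper offers no proof of this proposition at all: it is imported verbatim from de Jeu \cite{dJ2}, Theorem 5.1, so the benchmark is de Jeu's argument there. Your part (2) is sound: both inversion formulas converge in the Fr\'echet topology of $\mathcal E(\frak a)$, the extended intertwiner is continuous there, and the identities $V_k\bigl(e^{\langle\,\cdot\,,z\rangle}\bigr)=E_k(\,\cdot\,,z)$ and $V_k^{-1}\bigl(E_k(\,\cdot\,,z)\bigr)=e^{\langle\,\cdot\,,z\rangle}$ are legitimate once the extension of $V_k$ to $\mathcal E(\frak a)$ (also from \cite{dJ2}) is taken as given, which the statement itself presupposes. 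The $V_k$-half of part (1) is also correct and cleanly argued: the estimate \eqref{estimableitung} bounds the Fourier--Laplace transform $E_k(x,z)$ of the distribution $\mu_x^k: f\mapsto V_kf(x)$ by $\sqrt{|W|}\,e^{\max_{w\in W}\langle wx,\mathrm{Re}\,z\rangle}$, and Paley--Wiener--Schwartz then places $\mathrm{supp}\,\mu_x^k$ inside $\overline B_{|x|}(0)$ (indeed inside the convex hull of $W.x$), which is valid for all $\mathrm{Re}\,k\geq 0$.

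The genuine gap is exactly where you locate it: the $V_k^{-1}$-half of (1). Your scheme needs a bound of the form $\bigl|V_k^{-1}\bigl(e^{\langle\,\cdot\,,z\rangle}\bigr)(x)\bigr|\leq C(1+|z|)^N e^{|x|\,|\mathrm{Re}\,z|}$, and nothing in the paper or in \cite{dJ} supplies it; the estimate \eqref{estimableitung} controls $V_k$ applied to exponentials, never $V_k^{-1}$. This is not a technicality that ``the same scheme'' will absorb: the support statement for $V_k^{-1}$ is essentially equivalent to the hard direction of the Paley--Wiener theorem for the Dunkl transform (an entire function of exponential type $R$ with rapid decay on $\frak a$ is the Dunkl transform of a function supported in $\overline B_R(0)$), and that theorem is the central result of \cite{dJ2}, proved there by a substantial independent argument. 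Neither of your two sketched escape routes (a contour shift for the oscillatory integral in (2b), or writing $e^{\langle\,\cdot\,,z\rangle}$ as a superposition of Dunkl kernels) is carried out, and both in effect presuppose the very growth estimate that is missing. So your proposal establishes part (2) and half of part (1), while the remaining half is reduced to an unproved estimate which is in fact the deepest point of the theorem being cited.
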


For an open subset $\Omega\subseteq \frak a$ we denote by $\mathcal D(\Omega) = C_c^\infty(\Omega)$ the set of test functions
and by $\mathcal D^\prime(\Omega)$ the set of distributions on 
$\Omega$.  
Recall that the topological dual $\mathcal E^\prime(\Omega)$ of $\mathcal E(\Omega)$ coincides with the set of compactly supported distributions on
$\Omega,$ and that
compactly supported distributions on $\frak a$ are tempered. 

\begin{definition}
We define the Dunkl-Laplace transform of $u\in \mathcal E^\prime (\frak a)$ by 
$$ \mathcal L_ku: \frak a_{\mathbb C}  \to \mathbb C, \, \mathcal L_ku(z) := \langle u(x), E_k(x,-z)\rangle,$$
where the notion $u(x)$ indicates that $u$ acts on functions of the variable $x$. 
\end{definition}

As in the classical case, 
we have the following fact for compactly supported distributions, c.f. also \cite{BSO}.

\begin{lemma}\label{Laplace_allg}
Let $u\in \mathcal E^\prime(\frak a).$ Then $\mathcal L_k u$ is analytic on $\frak a_\mathbb C$, and the  Dunkl transform $\widehat u^{\,k} $ 
is a regular tempered distribution given by $\mathcal L_k u$ in the 
sense that
\begin{equation}\label{id_laplace}  \langle \widehat u^{\,k}, \varphi\rangle = \int_{\frak a } \varphi(\xi) \mathcal L_k u (i\xi) \,\omega_k(\xi)d\xi\,, \quad \varphi\in \mathcal S(\frak a).
\end{equation}

\end{lemma}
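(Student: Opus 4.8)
The plan is to treat the two assertions separately, starting with the analyticity of $\mathcal L_k u$ and then deriving the integral identity \eqref{id_laplace}. Since $u\in\mathcal E^\prime(\frak a)$ is compactly supported, it has finite order, say $N$, and there are a compact set $K\supseteq\text{supp}\,u$ and a constant $C>0$ with $|\langle u,\psi\rangle|\le C\sum_{|\alpha|\le N}\sup_{x\in K}|\partial^\alpha\psi(x)|$ for all $\psi\in\mathcal E(\frak a)$. By the analyticity of $(x,z)\mapsto E_k(x,-z)$ recalled in Section~\ref{intertwining}, the difference quotients of $z\mapsto E_k(\,\cdot\,,-z)$ in each complex coordinate converge, together with all their $x$-derivatives, uniformly on $K$ to the corresponding partial derivative; hence these difference quotients converge in the Fr\'echet topology of $\mathcal E(\frak a)$. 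As $u$ is continuous on $\mathcal E(\frak a)$, differentiation may be carried through the pairing, so $z\mapsto\langle u(x),E_k(x,-z)\rangle$ is complex differentiable in each variable and therefore analytic on $\frak a_{\mathbb C}$.

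For the identity \eqref{id_laplace} I would unfold the definitions. By the definition of the Dunkl transform on $\mathcal S^\prime(\frak a)$, for $\varphi\in\mathcal S(\frak a)$,
\begin{equation*}
\langle\widehat u^{\,k},\varphi\rangle=\langle u,\widehat\varphi^{\,k}\rangle=\Big\langle u(x),\int_{\frak a}\varphi(\xi)\,E_k(\xi,-ix)\,\omega_k(\xi)\,d\xi\Big\rangle.
\end{equation*}
The central step is to interchange the action of $u$ with the $\xi$-integration. To justify this I would regard $\xi\mapsto\varphi(\xi)\,E_k(\,\cdot\,,-i\xi)\,\omega_k(\xi)$ as a map into $\mathcal E(\frak a)$ and show that the integral converges there as a Bochner integral, equivalently that its Riemann sums converge in $\mathcal E(\frak a)$. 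This uses that $\varphi$ is rapidly decreasing while all $x$-derivatives of $E_k(x,-i\xi)$ grow at most polynomially in $\xi$, uniformly for $x$ in $K$; the latter follows from the analyticity of $E_k$ together with the estimate \eqref{estimableitung} via Cauchy's inequalities, noting that $\text{Re}(-i\xi)=0$ for real $\xi$. Continuity of $u$ then yields
\begin{equation*}
\langle\widehat u^{\,k},\varphi\rangle=\int_{\frak a}\varphi(\xi)\,\langle u(x),E_k(\xi,-ix)\rangle\,\omega_k(\xi)\,d\xi.
\end{equation*}

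Finally I would use the symmetry and homogeneity relations $E_k(x,y)=E_k(y,x)$ and $E_k(\lambda x,y)=E_k(x,\lambda y)$ to rewrite $E_k(\xi,-ix)=E_k(-ix,\xi)=E_k(x,-i\xi)$, so that $\langle u(x),E_k(\xi,-ix)\rangle=\langle u(x),E_k(x,-i\xi)\rangle=\mathcal L_k u(i\xi)$; substituting into the last display gives exactly \eqref{id_laplace}. The same polynomial bound on $\mathcal L_k u(i\xi)$ shows that $\xi\mapsto\mathcal L_k u(i\xi)\,\omega_k(\xi)$ is of at most polynomial growth and locally integrable, since $|\omega_k|$ is locally integrable for $\text{Re}\,k\ge0$; hence it represents a regular tempered distribution, which is the asserted regularity of $\widehat u^{\,k}$. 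I expect the main obstacle to be the interchange of $u$ with the integral: making the convergence of the $\mathcal E(\frak a)$-valued integral precise requires the uniform polynomial bounds on the $x$-derivatives of $E_k(x,-i\xi)$ in $\xi$, which is the only genuinely technical ingredient.
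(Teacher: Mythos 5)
Your proposal is correct, and its skeleton coincides with the paper's proof: first analyticity of $\mathcal L_k u$ by differentiating under the distributional pairing, then \eqref{id_laplace} by unfolding $\langle\widehat u^{\,k},\varphi\rangle=\langle u,\widehat\varphi^{\,k}\rangle$ and interchanging $u$ with the $\xi$-integration. The difference lies in how the two steps are justified. The paper cites H\"ormander \cite{H}: Theorem 2.1.3 there for smoothness of $z\mapsto\langle u(x),E_k(x,-z)\rangle$ with differentiation passing into the argument (your finite-order/difference-quotient argument is precisely the content of that theorem), and the Fubini theorem for compactly supported distributions for the interchange, after first reducing to $\varphi\in\mathcal D(\frak a)$ so that the $\xi$-integral has compact support and the tensor-product argument applies verbatim. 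You instead keep $\varphi\in\mathcal S(\frak a)$ throughout and justify the interchange by convergence of an $\mathcal E(\frak a)$-valued integral, which obliges you to prove polynomial growth in $\xi$ of all $x$-derivatives of $E_k(x,-i\xi)$, uniformly on compacta. What your route buys is that the regularity assertion becomes explicit: the polynomial bound on $\mathcal L_ku(i\xi)$ together with the polynomial growth of $|\omega_k|$ shows directly that the right-hand side of \eqref{id_laplace} is a regular tempered distribution --- a point the paper leaves implicit, though it is also needed there in order to pass from $\mathcal D(\frak a)$ back to $\mathcal S(\frak a)$ by density.

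One technical point in your sketch needs repair. The polynomial bound does not follow from \eqref{estimableitung} and Cauchy's inequalities quite as stated: Cauchy's inequalities force you to evaluate $E_k$ at complex points $x'$ with $|x'-x|\le r$, where \eqref{estimableitung} is not directly applicable (its first argument must be real). The correct substitute, obtained from the relations $E_k(x',-i\xi)=E_k(-i\xi,x')=E_k(\xi,-ix')$ and then \eqref{estimableitung}, is $|E_k(x',-i\xi)|\le\sqrt{|W|}\,e^{|\xi|\,|\text{Im}\,x'|}$, so the exponent no longer vanishes once $x'$ leaves the real domain; with a fixed radius $r$ this produces the useless factor $e^{r|\xi|}$. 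You must shrink the radius, say $r=(1+|\xi|)^{-1}$, which gives $\sup_{x\in K}|\partial_x^\alpha E_k(x,-i\xi)|\le C_\alpha(1+|\xi|)^{|\alpha|}$ --- exactly the bound you need; alternatively one can quote the derivative estimates for the Dunkl kernel from de Jeu \cite{dJ} directly. With this adjustment your argument is complete.
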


\begin{proof} This is the same as in \cite[Theorem 7.1.14]{H} for the classical case. We briefly note the steps:
According to \cite[Theorem 2.1.3]{H} $ \mathcal L_k u$ is smooth on $\frak a_{\mathbb C}$ and differentiations  with respect to $z$ may be taken in the argument $E_k(x, -z)$. As this kernel is analytic in $z$, the same follows for $ \mathcal L_k u(z).$
For the proof of \eqref{id_laplace}, it suffices to consider $\varphi \in \mathcal D(\frak a).$ 
By the Fubini theorem for compactly supported distributions
we obtain 
\begin{align} \langle u, \widehat \varphi^{\,k} \rangle & = 
 \big\langle u(x), \int_{\frak a} \varphi(\xi) E_k(-ix, \xi) w_k(\xi) d\xi\,\big\rangle  \notag \\
 & = 
 \big\langle u(x) \otimes \varphi(\xi)\omega_k(\xi), E_k(-ix, \xi)\big\rangle \, = \int_{\frak a} \varphi(\xi) \langle u(x), E_k(-ix, \xi)\rangle \, \omega_k(\xi) d\xi. 
 \notag 
 \end{align}
This implies the assertion.
\end{proof}


\begin{corollary}\label{laplace_injectivity} Let $\, u\in \mathcal E^\prime(\frak a ).$ 
\begin{enumerate}
\item[\rm{(1)}] If $\mathcal L_k u = 0$, then $u=0.$
\item[\rm{(2)}] Suppose that $m\in M_b(\frak a)$ is a complex bounded Radon measure satisfying
$$ \mathcal L_ku (i\xi) = \int_{\frak a} E_k(x,-i\xi) dm(x) \quad\text{for all } \xi \in \frak a.$$
Then $m=u.$
\end{enumerate}
\end{corollary}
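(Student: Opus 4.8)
The plan is to deduce both statements from Lemma~\ref{Laplace_allg} together with the injectivity of the Dunkl transform on $\mathcal S^\prime(\frak a)$, which is available because $\mathcal D_k$ is a homeomorphism of $\mathcal S^\prime(\frak a)$ and hence in particular a bijection.

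For part (1) I would argue directly. If $\mathcal L_k u = 0$ on $\frak a_{\mathbb C}$, then in particular $\mathcal L_k u(i\xi) = 0$ for every $\xi \in \frak a$. Substituting this into the identity \eqref{id_laplace} of Lemma~\ref{Laplace_allg} gives $\langle \widehat u^{\,k}, \varphi\rangle = 0$ for all $\varphi \in \mathcal S(\frak a)$, so that $\widehat u^{\,k} = 0$ in $\mathcal S^\prime(\frak a)$. Injectivity of the Dunkl transform on $\mathcal S^\prime(\frak a)$ then forces $u = 0$.

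For part (2) the idea is to show that $u$ and $m$ have the same Dunkl transform and to conclude again by injectivity. Since $m$ is a bounded measure it defines a tempered distribution, and by the estimate \eqref{estimableitung} we have $|E_k(x,-i\xi)| \leq \sqrt{|W|}$ for all $x,\xi \in \frak a$, so that the function $g(\xi) := \int_{\frak a} E_k(x,-i\xi)\, dm(x)$ is well defined and bounded. Testing the distributional Dunkl transform of $m$ against $\varphi \in \mathcal D(\frak a)$, writing out $\widehat\varphi^{\,k}$, and interchanging the order of integration, I would obtain
\begin{equation*}
\langle \widehat m^{\,k}, \varphi\rangle = \langle m, \widehat\varphi^{\,k}\rangle = \int_{\frak a} \varphi(\xi)\, g(\xi)\, \omega_k(\xi)\, d\xi ,
\end{equation*}
that is, $\widehat m^{\,k}$ is the regular tempered distribution with $\omega_k$-density $g$. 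By hypothesis $g(\xi) = \mathcal L_k u(i\xi)$ for every $\xi \in \frak a$, so comparing the previous display with \eqref{id_laplace} yields $\langle \widehat m^{\,k}, \varphi\rangle = \langle \widehat u^{\,k}, \varphi\rangle$ for all $\varphi$, hence $\widehat m^{\,k} = \widehat u^{\,k}$ and therefore $m = u$.

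The only step requiring genuine care, and thus the main obstacle, is the Fubini interchange used to identify $\widehat m^{\,k}$ with the regular distribution given by $g$: one must check absolute integrability of the double integral, which follows from the uniform bound on $E_k(\cdot,-i\xi)$, the rapid decay of $\varphi$, and the polynomial growth of $\omega_k$. Everything else reduces to a direct application of Lemma~\ref{Laplace_allg} and the homeomorphism property of $\mathcal D_k$.
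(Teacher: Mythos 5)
Your proposal is correct and follows essentially the same route as the paper: both parts reduce to Lemma~\ref{Laplace_allg} plus injectivity of the Dunkl transform on $\mathcal S^\prime(\frak a)$, with part (2) obtained by identifying $\widehat m^{\,k}$ and $\widehat u^{\,k}$ as the same regular tempered distribution. The paper's proof simply writes the chain of equalities $\langle \widehat u^{\,k},\varphi\rangle = \int_{\frak a}\varphi(\xi)\bigl(\int_{\frak a}E_k(x,-i\xi)\,dm(x)\bigr)\omega_k(\xi)\,d\xi = \langle \widehat m^{\,k},\varphi\rangle$ and leaves implicit the Fubini interchange that you correctly single out and justify via the bound $|E_k(x,-i\xi)|\leq\sqrt{|W|}$, the decay of $\varphi$, and the polynomial growth of $\omega_k$.
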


\begin{proof} (1) is obvious by the above Lemma, because
the Dunkl transform is a homeomorphism of $\mathcal S^\prime(\frak a).$  

(2) Consider $m$ as a tempered distribution on $\frak a.$ By Lemma \ref{Laplace_allg} and our assumption  we obtain for test functions $\varphi\in \mathcal S(\frak a),$
$$ \langle \widehat u^{\,k} ,\varphi \rangle = 
\int_{\frak a} \varphi(\xi) \bigl(\int_{\frak a } E_k(x,-i\xi) dm(x)\bigr) \omega_k(\xi) d\xi \,= \,  \langle \widehat m^{k} , \varphi \rangle. $$
Thus $\widehat m^{k} = \widehat u^{\,k}$ which implies $m=u$ by the injectivity of the Dunkl transform on $\mathcal S^\prime(\frak a).$
\end{proof}

 Consider now a fixed root system $R\subset \frak a$ with two multiplicities $k, \, k^\prime$ satisfying $\text{Re}\, k \geq 0, \text{Re}\, k^\prime \geq 0.$  Then the operator
$$ V_{k^\prime\!,k} := V_{k^{\prime}} \circ V_k^{-1}$$ 
is a topological isomorphism of $\mathcal E(\frak a)$ and intertwines the 
Dunkl operators associated with multiplicities $k$ and $k^\prime$, 
$$ T_\xi(k^\prime) V_{k^\prime\!,k} = V_{k^\prime\!,k}\, T_\xi(k) \quad \text{ for all } \xi \in \frak a.$$
Note that for all $x\in \frak a $ and $z\in \frak a_\mathbb C$, 
\begin{equation}\label{E_V} E_{k^\prime}(x,z) = V_{k^\prime\!,k}\bigl(E_k(\,.\,,z)\bigr)(x).\end{equation}
For fixed $x\in \frak a$ the assignment $\,\langle u_x^{k^\prime\!, k}, \varphi\rangle:= V_{k^\prime\!,k}\, \varphi(x)\,$ defines a compactly supported
distribution $u_x^{k^\prime\!, k}\in \mathcal E^\prime(\frak a)$ satisfying 
\begin{equation} \label{dist_Sonine} \langle u_x^{k^\prime\!,k}, E_k(\,.\,, z)\rangle = E_{k^\prime}(x,z).\end{equation}

\begin{lemma}\label{support} The support of $u_{x}^{k^\prime\!,k}$ is contained in the closed ball $\overline{B}_{|x|}(0).$ 
\end{lemma}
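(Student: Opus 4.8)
The plan is to show that $u_x^{k^\prime\!,k}$ annihilates every test function supported in the exterior $\{y\in\frak a : |y|>|x|\}$, which is precisely the assertion that its support lies in $\overline B_{|x|}(0)$. The whole argument rests on Proposition \ref{deJeu}(1), which I would exploit to track how the property of vanishing on a ball centered at the origin passes through the two operators composing $V_{k^\prime\!,k}=V_{k^\prime}\circ V_k^{-1}$.

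First I would record the key propagation step. If $\varphi\in\mathcal E(\frak a)$ vanishes on $B_R(0)$, then by Proposition \ref{deJeu}(1) applied to $V_k^{-1}$, the function $V_k^{-1}\varphi$ again vanishes on $B_R(0)$; applying the same proposition once more, now to $V_{k^\prime}$, shows that $V_{k^\prime\!,k}\varphi=V_{k^\prime}\bigl(V_k^{-1}\varphi\bigr)$ also vanishes on $B_R(0)$. Thus the composite intertwiner $V_{k^\prime\!,k}$ inherits the ball-preservation property of $V_k$ and $V_{k^\prime}$ separately.

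Next I would take an arbitrary $\varphi\in\mathcal D(\frak a)$ with $\text{supp}\,\varphi\subseteq\{y:|y|>|x|\}$. Since $\text{supp}\,\varphi$ is compact and disjoint from $\overline B_{|x|}(0)$, its distance to the origin satisfies $R_0:=\min_{y\in\text{supp}\,\varphi}|y|>|x|$, so $\varphi$ vanishes on the open ball $B_{R_0}(0)$, which contains $x$. By the propagation step, $V_{k^\prime\!,k}\varphi$ vanishes on $B_{R_0}(0)$, and evaluating at $x\in B_{R_0}(0)$ gives $\langle u_x^{k^\prime\!,k},\varphi\rangle=V_{k^\prime\!,k}\varphi(x)=0$. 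Hence $u_x^{k^\prime\!,k}$ vanishes on the open set $\{y:|y|>|x|\}$, which is exactly $\text{supp}\,u_x^{k^\prime\!,k}\subseteq\overline B_{|x|}(0)$.

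There is no serious obstacle once Proposition \ref{deJeu}(1) is in hand; the only point requiring care is the interplay between open and closed balls. One must observe that the evaluation point $x$ lies \emph{strictly} inside the ball $B_{R_0}(0)$ on which $V_{k^\prime\!,k}\varphi$ is known to vanish, and that the compactness of $\text{supp}\,\varphi$ upgrades the pointwise strict inequality $|y|>|x|$ to a uniform gap $R_0>|x|$, so that a single radius $R_0$ works simultaneously for the whole support and for $x$.
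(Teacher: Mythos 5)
Your proof is correct and follows essentially the same route as the paper: both arguments rest on Proposition \ref{deJeu}(1) applied first to $V_k^{-1}$ and then to $V_{k^\prime}$, so that $V_{k^\prime\!,k}\varphi$ vanishes on a ball containing $x$ whenever $\varphi$ is supported away from $\overline B_{|x|}(0)$. Your extra care in extracting the uniform gap $R_0>|x|$ from compactness is a welcome refinement of the paper's one-line version, which vanishes $V_{k^\prime\!,k}\varphi$ only on the open ball $B_{|x|}(0)$ and implicitly relies on continuity to conclude at the boundary point $x$.
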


\begin{proof} Let $\varphi\in \mathcal D(\frak a)$ with $\text{supp}\, \varphi 
\cap \overline{B}_{|x|}(0) = \emptyset.$ Then by Proposition \ref{deJeu}, 
$(V_{k^\prime}\circ V_k^{-1})(\varphi)$ vanishes on $B_{|x|}(0)$ and therefore $\langle u_x^{k^\prime\!,k}, \varphi\rangle =0.$ 
 
\end{proof}

\begin{lemma} For $\text{Re}\, k \geq 0, \, \text{Re}\, k^\prime \geq 0$ the following are equivalent. 
\begin{enumerate}
\item[\rm{(1)}] $V_{k^\prime\!,k}$ is positive on $\mathcal P,$  i.e. $V_{k^\prime\!,k} \,p \geq 0$ on $\frak a$ for all $p \in \mathcal P$ 
with $p\geq 0$ on $\frak a.$
\item[\rm{(2)}] $V_{k^\prime\!,k}$ is positive on $\mathcal E(\frak a),$ i.e. $V_{k^\prime\!,k} \,f \geq 0$ for all 
$f \in \mathcal E(\frak a)$ 
with $f\geq 0$.
\item[\rm{(3)}] For each $x\in \frak a$ there exists a probability measure $\mu_x^{k^\prime\!,k}\in M^1(\frak a) $ such that 
\begin{equation}\label{Sonine_Dunkl_L} E_{k^\prime}(x, iy) = \int_{\frak a} E_k(\xi,iy) d\mu_x^{k^\prime\!,k}(\xi) \quad \forall y\in \frak a.
\end{equation}
\end{enumerate}
In this case, the representing measure $\mu_x^{k^\prime\!,k}$ is unique and compactly supported. 
\end{lemma}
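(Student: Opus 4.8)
The plan is to prove the chain of implications $(2)\Rightarrow(1)\Rightarrow(3)\Rightarrow(2)$, together with the uniqueness and compact support statements at the end. The equivalence of $(1)$ and $(2)$ and the passage to representing measures should all be reductions to the structural facts already assembled: the support estimate of Lemma~\ref{support}, the injectivity results of Corollary~\ref{laplace_injectivity}, and the density of polynomials in $\mathcal E(\frak a)$.

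\begin{proof}
The implication $(2)\Rightarrow(1)$ is trivial since $\mathcal P\subset \mathcal E(\frak a)$. For $(1)\Rightarrow(2)$, fix $f\in\mathcal E(\frak a)$ with $f\geq 0$ and $x\in\frak a$. By Lemma~\ref{support}, the value $V_{k^\prime\!,k}f(x)=\langle u_x^{k^\prime\!,k},f\rangle$ depends only on the restriction of $f$ to the compact ball $\overline B_{|x|}(0)$. On a neighbourhood of this ball one may uniformly approximate $f$ from below by nonnegative polynomials via the Stone--Weierstrass theorem (or directly via Bernstein-type polynomials), so positivity on $\mathcal P$ forces $\langle u_x^{k^\prime\!,k},f\rangle\geq 0$. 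The subtle point here, which I expect to be the main obstacle, is that $u_x^{k^\prime\!,k}$ is a \emph{distribution} of some finite order, not an a priori positive functional, so mere pointwise approximation of $f$ does not immediately control $\langle u_x^{k^\prime\!,k},f\rangle$; one must either first establish that positivity on $\mathcal P$ already forces $u_x^{k^\prime\!,k}$ to be a positive measure, or localize $f$ by a cutoff and approximate it together with enough derivatives to match the order of the distribution.

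The cleanest route around this obstacle is to prove $(1)\Rightarrow(3)$ directly and deduce $(2)$ afterwards. Assuming $(1)$, I would show that the functional $p\mapsto V_{k^\prime\!,k}p(x)$ on $\mathcal P$ is positive and normalized ($V_{k^\prime\!,k}1=V_{k^\prime}V_k^{-1}1=1$), and that by Lemma~\ref{support} it is supported in $\overline B_{|x|}(0)$. A positive linear functional on polynomials that is continuous in the topology of uniform convergence on the fixed compact set $\overline B_{|x|}(0)$ extends by Riesz representation to a positive measure $\mu_x^{k^\prime\!,k}$ on that ball; the normalization makes it a probability measure. Evaluating this measure against the polynomial approximations of $E_k(\,\cdot\,,iy)$, or more efficiently against $E_k(\,\cdot\,,z)$ via the identity \eqref{dist_Sonine}, yields
\begin{equation*}
E_{k^\prime}(x,iy)=\langle u_x^{k^\prime\!,k},E_k(\,\cdot\,,iy)\rangle=\int_{\frak a}E_k(\xi,iy)\,d\mu_x^{k^\prime\!,k}(\xi),
\end{equation*}
which is \eqref{Sonine_Dunkl_L}. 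The implication $(3)\Rightarrow(2)$ then follows because \eqref{Sonine_Dunkl_L} identifies $u_x^{k^\prime\!,k}$ with the positive measure $\mu_x^{k^\prime\!,k}$: by Corollary~\ref{laplace_injectivity}(2), applied with $m=\mu_x^{k^\prime\!,k}$ and $u=u_x^{k^\prime\!,k}$, the distribution $u_x^{k^\prime\!,k}$ coincides with $\mu_x^{k^\prime\!,k}$, so $V_{k^\prime\!,k}f(x)=\int_{\frak a}f\,d\mu_x^{k^\prime\!,k}\geq 0$ for every $f\geq 0$.

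Finally, uniqueness and compact support of the representing measure both come out of this last identification. Compact support is immediate from Lemma~\ref{support}, since $\mu_x^{k^\prime\!,k}=u_x^{k^\prime\!,k}$ has support in $\overline B_{|x|}(0)$. Uniqueness is exactly the content of Corollary~\ref{laplace_injectivity}(2): any bounded Radon measure $m$ satisfying \eqref{Sonine_Dunkl_L}, i.e.\ reproducing $\mathcal L_k u_x^{k^\prime\!,k}(i\xi)=E_{k^\prime}(x,-i\xi)$ against the Dunkl kernel, must equal $u_x^{k^\prime\!,k}$ and hence equal $\mu_x^{k^\prime\!,k}$. This closes all the implications and establishes the supplementary claims.
\end{proof}
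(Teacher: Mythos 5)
Your chain $(2)\Rightarrow(1)\Rightarrow(3)\Rightarrow(2)$ breaks at the step $(1)\Rightarrow(3)$, and the obstacle is exactly the one you yourself flag in the first paragraph; your proposed "cleanest route" does not actually overcome it. You assert that the functional $p\mapsto V_{k^\prime\!,k}\,p(x)$ is "continuous in the topology of uniform convergence on $\overline B_{|x|}(0)$" and then invoke Riesz representation, but this continuity is never proved and does not follow from your hypotheses. The standard device for extracting sup-norm continuity from positivity --- namely $\|p\|_{\infty,B}\cdot 1\pm p\geq 0$, hence $|L(p)|\leq L(1)\,\|p\|_{\infty,B}$ --- requires positivity of $L$ on polynomials that are nonnegative \emph{on the ball}, whereas hypothesis (1) only gives positivity on polynomials nonnegative \emph{on all of} $\frak a$; a polynomial that is tiny on $\overline B_{|x|}(0)$ can be very negative elsewhere, so the two cones are genuinely different and the argument collapses. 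Nor can Lemma \ref{support} be used to "localize" the functional on $\mathcal P$: that lemma concerns test functions vanishing near the ball, and no nonzero polynomial does so, so it gives no information about $\langle u_x^{k^\prime\!,k},p\rangle$ by itself. Finally, even if you replaced Riesz by Haviland's solution of the moment problem (which does apply to positivity on globally nonnegative polynomials), you would only obtain \emph{some} representing measure on $\frak a$, with no compact support and no determinacy; since indeterminate moment problems admit distinct measures with equal moments but different integrals of bounded functions, you could not legitimately pass from equality of moments to the identity \eqref{Sonine_Dunkl_L} for the bounded kernel $E_k(\,\cdot\,,iy)$.

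The paper resolves the difficulty by proving $(1)\Rightarrow(2)$ directly with a squaring trick, which is the idea missing from your proposal: for $f\in\mathcal E(\frak a)$ with $f>0$, density of $\mathcal P$ in $\mathcal E(\frak a)$ gives real polynomials $p_n\to\sqrt f$ in $\mathcal E(\frak a)$, so that $q_n:=p_n^2$ are \emph{globally} nonnegative polynomials with $q_n\to f$ in $\mathcal E(\frak a)$; since $u_x^{k^\prime\!,k}\in\mathcal E^\prime(\frak a)$ is continuous for precisely this topology (all derivatives on compacts), one gets $V_{k^\prime\!,k}f(x)=\lim_n V_{k^\prime\!,k}\,q_n(x)\geq 0$, and the case $f\geq 0$ follows by replacing $f$ with $f+\epsilon$ and using $V_{k^\prime\!,k}1=1$. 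Then $(2)\Rightarrow(3)$ is immediate from the classical fact that a positive distribution is a (here compactly supported) positive Radon measure, together with evaluation at $z=0$ to get total mass one. Your final step $(3)\Rightarrow(2)$ via Corollary \ref{laplace_injectivity}(2), and your treatment of uniqueness and compact support, are correct and in fact slightly more direct than the paper's $(3)\Rightarrow(1)$ (which re-derives the action on $\mathcal P$ by homogeneous expansion); but as it stands the cycle is broken, so the proof is incomplete.
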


\begin{proof}
$(1) \Rightarrow (2)\,$ Let $f\in \mathcal E(\frak a)$ with $f > 0$ on $\frak a$. As $\mathcal P$ is 
dense in $\mathcal E(\frak a)$ (see \cite[Chap.15]{Tr}), there exists a sequence $(p_n) $
consisting of real-valued polynomials $p_n\in \mathcal P$ such that $p_n \to \sqrt f\,$ in $\mathcal E(\frak a).$ Then the polynomials 
$q_n:= p_n^2$ are nonnegative on $\frak a$ and $q_n\to f$ in $\mathcal E(\frak a).$ 
It follows that for alll $x\in \frak a,$
$$ V_{k^\prime\!,k}f(x) = \langle u_x^{k^\prime\!,k}, f\rangle = \lim_{n\to \infty} \langle  u_x^{k^\prime\!,k},
q_n\rangle = \lim_{n\to\infty} V_{k^\prime\!,k}\, q_n(x) \geq 0.$$
A simple approximation argument, using $V_{k^\prime\!,k} 1 = 1$, implies the assertion.

$(2) \Rightarrow (3)\,$ By assumption, the distribution $u_x^{k^\prime\!, k}$ is positive and therefore given by a compactly supported positive 
Radon measure (\cite[Theorem 2.1.7]{H}). Denoting this measure by $\mu_x^{k^\prime\!,k},$ we obtain from \eqref{E_V} that  
$$ E_{k^\prime}(x,z) = \int_{\frak a} E_k(\xi,z) d\mu_x^{k^\prime\!,k}(\xi), \quad z \in \frak a_{\mathbb C}. $$
As $E_k(\xi, 0) = 1,$ evaluation at $z=0$ shows  that $\mu_x^{k^\prime\!,k}$ is a probability measure. 

$(3) \Rightarrow (1)$ In view of formula \eqref{dist_Sonine},  Corollary \ref{laplace_injectivity}(2) implies that
$u_x^{k^\prime\!,k} = \mu_x^{k^\prime\!,k}$. In particular, $\mu_x^{k^\prime\!,k} $ is 
compactly supported and uniquely determined by  \eqref{Sonine_Dunkl_L}.
We claim that $V_{k^\prime\!,k}$ acts on $\mathcal P$  by 
\begin{equation}\label{integralop} V_{k^\prime\!,k}\, p(x) = \int_{\frak a} p(\xi)\, d\mu_x^{k^\prime\!,k}(\xi).\end{equation}
 After replacing $p$ by $V_kp$, it suffices to prove that
 $$ V_{k^\prime} p(x) = \int_{\frak a} V_kp(\xi) \,d\mu_x^{k^\prime\!, k}(\xi) \quad \forall p \in \mathcal P.$$
 But homogeneous expansion in \eqref{Sonine_Dunkl_L} gives
$$ \sum_{n=0}^\infty \frac{1}{n!} V_{k^\prime}(\langle \,.\,, iy\rangle^n)(x) = \sum_{n=0}^\infty \frac{1}{n!}\int_{\frak a} 
V_k(\langle \,.\,, iy\rangle^n)(\xi) d\mu_x^{k^\prime\!,k}(\xi).$$
Comparison of the homogeneous parts in $y$ shows that
$$ V_{k^\prime} (\langle\,.\,,y\rangle^n)(x) = \int_{\frak a } 
V_k(\langle \,.\,,y\rangle^n)(\xi)d\mu_x^{k^\prime\!,k}(\xi)$$
for all $y\in \frak a $ and $n\in \mathbb Z_+ = \{0,1,2\ldots\}.$ 
This implies the assertion.
\end{proof}

The following analyticity result will be important in the next section.

\begin{lemma}\label{intertwiner_analytic} Let $\varphi \in \mathcal S(\frak a).$ Then
for fixed $x \in \frak a$ and $k\in \mathcal K$ with $\text{Re}\,k\geq 0$, the mapping
$\, k^\prime \mapsto V_{k^\prime\!,k}\, \varphi(x) \,$ is analytic on $\{k^\prime \in \mathcal K: {\rm Re}\, k^\prime > 0\}.$

\end{lemma}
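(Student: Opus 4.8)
The plan is to reduce the claim to the analyticity of the Dunkl kernel in the multiplicity parameter, by producing an explicit integral representation of $V_{k^\prime\!,k}\varphi(x)$ in which $k^\prime$ enters only through $E_{k^\prime}$. Recall from \eqref{dist_Sonine} that the compactly supported distribution $u_x^{k^\prime\!,k}\in\mathcal E^\prime(\frak a)$, characterized by $\langle u_x^{k^\prime\!,k},\psi\rangle = V_{k^\prime\!,k}\psi(x)$, satisfies $\mathcal L_k u_x^{k^\prime\!,k}(i\xi) = \langle u_x^{k^\prime\!,k}(y),E_k(y,-i\xi)\rangle = E_{k^\prime}(x,-i\xi)$. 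First I would apply Lemma \ref{Laplace_allg} to $u=u_x^{k^\prime\!,k}$, which yields for every $\phi\in\mathcal S(\frak a)$
\begin{equation*}
V_{k^\prime\!,k}\bigl(\widehat\phi^{\,k}\bigr)(x) = \bigl\langle u_x^{k^\prime\!,k},\widehat\phi^{\,k}\bigr\rangle = \bigl\langle \widehat{(u_x^{k^\prime\!,k})}^{\,k},\phi\bigr\rangle = \int_{\frak a}\phi(\xi)\,E_{k^\prime}(x,-i\xi)\,\omega_k(\xi)\,d\xi .
\end{equation*}
Since $\mathcal D_k$ is a homeomorphism of $\mathcal S(\frak a)$, the given $\varphi$ equals $\widehat\phi^{\,k}$ for the fixed function $\phi=\mathcal D_k^{-1}\varphi$, which does not depend on $k^\prime$; thus it suffices to prove that the right-hand side of this identity is analytic in $k^\prime$.

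For this I would establish holomorphy under the integral sign. By the analyticity of $(k,x,y)\mapsto E_k(x,y)$ on $\mathcal K^{reg}\times\frak a_{\mathbb C}\times\frak a_{\mathbb C}$ together with $\{k^\prime\in\mathcal K:\text{Re}\,k^\prime>0\}\subseteq\mathcal K^{reg}$, the integrand $k^\prime\mapsto\phi(\xi)E_{k^\prime}(x,-i\xi)\omega_k(\xi)$ is analytic for each fixed $\xi$. The decisive point is a majorant uniform in $k^\prime$: as $\xi\in\frak a$ is real, $-i\xi$ is purely imaginary with $\text{Re}(-i\xi)=0$, so \eqref{estimableitung} gives $|E_{k^\prime}(x,-i\xi)|\le\sqrt{|W|}$ for all $\xi$ and all $k^\prime$ with $\text{Re}\,k^\prime\ge 0$. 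Because $\phi\in\mathcal S(\frak a)$ while $|\omega_k(\xi)|=\prod_{\alpha\in R}|\langle\alpha,\xi\rangle|^{\text{Re}\,k(\alpha)}$ grows at most polynomially, the function $\xi\mapsto\sqrt{|W|}\,|\phi(\xi)\omega_k(\xi)|$ is an integrable majorant independent of $k^\prime$.

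Finally I would conclude joint holomorphy in the finitely many complex coordinates of $k^\prime$ (one for each $W$-orbit in $R$) from the standard theorem on holomorphy of parameter integrals: the locally uniform $L^1$-majorant makes the integral locally bounded, while Morera's theorem combined with Fubini gives holomorphy in each coordinate separately, and Hartogs' theorem then yields joint analyticity on $\{\text{Re}\,k^\prime>0\}$. I expect the main obstacle to be the first step, the clean integral representation displayed above: here one must note that $V_k^{-1}\varphi$ need not be a Schwartz function, so the integral formulas of Proposition \ref{deJeu}(2) cannot simply be composed; passing through Lemma \ref{Laplace_allg} and the representing distribution $u_x^{k^\prime\!,k}$ avoids this difficulty and at the same time supplies the needed Fubini justification. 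Once the representation is established, the uniform bound for $E_{k^\prime}$ along the imaginary axis renders the analyticity almost immediate.
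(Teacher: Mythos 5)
Your proof is correct, and it ends at exactly the integral representation on which the paper's own proof rests; only the derivation of that representation is genuinely different. Indeed, your formula
\[
V_{k^\prime\!,k}\,\varphi(x)=\int_{\frak a}\phi(\xi)\,E_{k^\prime}(x,-i\xi)\,\omega_k(\xi)\,d\xi,
\qquad \phi=\mathcal D_k^{-1}\varphi,
\]
coincides with the paper's formula \eqref{Formel_Sonine}, because $\mathcal D_k^{-1}\varphi(\xi)=c_k^{-2}\,\widehat\varphi^{\,k}(-\xi)$, $\omega_k$ is even, and $E_{k^\prime}(x,i\eta)=E_{k^\prime}(ix,\eta)$; and your closing step (analyticity of $k^\prime\mapsto E_{k^\prime}$ on $\mathcal K^{reg}\supseteq\{{\rm Re}\,k^\prime\geq 0\}$, the uniform bound $|E_{k^\prime}(x,-i\xi)|\leq\sqrt{|W|}$ from \eqref{estimableitung} since ${\rm Re}(-i\xi)=0$, dominated convergence and Morera) is the paper's argument almost verbatim, with Hartogs added to make joint analyticity in the finitely many coordinates of $k^\prime$ explicit — a detail the paper leaves implicit. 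The difference: the paper obtains \eqref{Formel_Sonine} in one line by composing the two formulas of Proposition \ref{deJeu}(2) for $V_{k^\prime}$ and $V_k^{-1}$, namely $V_{k^\prime\!,k}\varphi=\frac{c_{k^\prime}^2}{c_k^2}\mathcal D_{k^\prime}^{-1}\bigl(\omega_k\omega_{k^\prime}^{-1}\mathcal D_k\bigr)\varphi$, whereas you route through the representing distribution $u_x^{k^\prime\!,k}$, identity \eqref{dist_Sonine}, Lemma \ref{Laplace_allg}, and the definition of the distributional Dunkl transform. Your detour addresses a real, if minor, point that the paper's one-liner glosses over: $V_k^{-1}\varphi$ need not be a Schwartz function, so applying Proposition \ref{deJeu}(2)(a) to it is not literally covered by the statement as quoted, and the composition requires either a density/continuity argument or the fuller form of de Jeu's theorem; your derivation sidesteps this entirely, using only facts the paper has already established for compactly supported distributions. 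The trade-off is brevity — the paper's proof is two lines, yours is a page — but yours is self-contained within the paper's own toolkit.
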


\begin{proof} By Proposition \ref{deJeu},
\begin{equation}\label{Formel_Sonine} V_{k^\prime\!,k}\,\varphi(x) = \frac{c_{k^\prime}^2}{c_k^2} 
\mathcal D_{k^\prime}^{-1}\bigl(\omega_k \omega_{k^\prime}^{-1}\mathcal D_k\bigr)\varphi(x) = 
\frac{1}{c_k^2} \int_{\frak a}  \widehat \varphi^{\,k}(\xi) E_{k^\prime}(ix,\xi) \omega_k(\xi)d\xi.\end{equation}
As $\widehat \varphi^{\,k}$ belongs to $\mathcal S(\frak a)$ and $\,k^\prime \to E_{k^\prime}(ix,\xi)$ is analytic on $\{k^\prime: \text{Re}\, k^\prime >0\}$ with $|E_{k^\prime}^B(ix,\xi)|\leq \sqrt{|W|}$, it follows by standard arguments (dominated convergence and Morera's theorem)  that the integral in \eqref{Formel_Sonine} depends analytically on  $k^\prime$ with $\text{Re}\, k^\prime >0.$
\end{proof}

\begin{remark}\label{Sonine_Soltani}
In the rank one case, Y. Xu derived in \cite[Lemma 2.1]{X} for $k^\prime > k >0$ the explicit formula
$$ V_{k^\prime\!,k} f(x) = \frac{\Gamma(k^\prime + 1/2)}{\Gamma(k^\prime -k)\Gamma(k+1/2)} \int_{-1}^1 f(xt) |t|^{2k}(1+t)(1-t^2)^{k^\prime-k-1} dt.$$
This operator and the associated
Sonine type integral representation for the rank one Dunkl kernel,
$$ E_{k^\prime}(x,z) = \frac{\Gamma(k^\prime + 1/2)}{\Gamma(k^\prime -k)\Gamma(k+1/2)} \int_{-1}^1 E_k(xt,z) |t|^{2k}(1+t)(1-t^2)^{k^\prime-k-1} dt$$
were further studied in \cite{Sol}. 

\end{remark}

\section{The Sonine formula for Bessel functions of type $B_n$}\label{B_n}

In this section, we consider the Dunkl kernel $E_k^B$ and the Bessel function $J_k^B$ associated with root system 
$$ B_n = \{ \pm e_i, 1\leq i \leq n\} \cup \{\pm e_i \pm e_j, \, 1\leq i < j \leq n\}\subset \mathbb R^n, $$ 
where $\mathbb R^n$ is equipped with its usual inner product. 
The associated reflection group is the hyperoctahedral group $W(B_n) = S_n \ltimes \mathbb Z_2^n, $ and
the multiplicity is of the form $k=(k_1, k_2)$ where $k_1$ and $k_2$ denote the value on the roots $\pm e_i$ and $\pm e_i\pm e_j$ respectively. 
We shall derive an explicit Sonine formula for the Bessel function $J_k^B$ 
at the reference point $\,{\bf 1} =(1, \ldots, 1)\in \mathbb R^n,$ extend it in a distributional sense to larger classes of multiplicities 
and construct counterexamples where the associated Bessel functions have no Sonine formula. Of decisive importance for our calculations is the well-known fact that 
$J_k^B$ can be expressed in terms of a certain multivariable  hypergeometric function. To recall this, we need some further notation.

Fix some index $\alpha >0.$ For partitions  $\lambda = (\lambda_1, \ldots, \lambda_n)\in \mathbb Z_+^n, \lambda_1 \geq 
\ldots \geq \lambda_n$ 
(for short, $\lambda \geq 0$) we denote by $C_\lambda^\alpha$ the Jack polynomials of index $\alpha$ in $n$ variables (c.f. \cite{Sta}), normalized such that
$$ (z_1 + \ldots + z_n)^m = \sum_{|\lambda|=m} C_\lambda^\alpha(z)\quad \text{ for all } m \in \mathbb Z_+.$$

Following the notation of \cite{K} and \cite{BF1},   we define for $\mu \in \mathbb C$ with 
$\text{Re}\, \mu > \frac{1}{\alpha}(n-1)$
the hypergeometric function 
$$ _0F_1^\alpha(\mu; z,w):= \sum_{\lambda\geq 0} \frac{1}{(\mu)_\lambda^\alpha |\lambda|!} \cdot \frac{C_\lambda^\alpha(z) 
C_\lambda^\alpha(w)}{C_\lambda^\alpha({\bf 1})} \quad (z,w\in \mathbb C^n)$$
with the generalized Pochhammer symbol 
$$ (\mu)_\lambda^\alpha := \prod_{j=1}^n \bigl(\mu - \frac{1}{\alpha} (j-1)\bigr)_{\lambda_j}.$$
In the one-dimensional  case $n=1$, the Jack polynomials are independent of $\alpha$ and given by $C_\lambda^\alpha(z) = z^\lambda, \, \lambda\in \mathbb Z_+$. Thus
$$ _0F_1^\alpha(\mu;-\frac{z^2}{4},1) = j_{\mu-1}(z).$$

In the general case, the Bessel function $J_k^B$ is expressed in terms of $_0F_1^\alpha$ as follows: 

\begin{proposition}
\label{hypergeom-bessel-B}
 Let $k=(k_1, k_2)$ with $\text{Re}\, k_1 \geq 0$ and $k_2 >0$. Then 
$$ J_k^B(z,w) = \,_0F_1^\alpha\bigl(\mu;\frac{z^2}{2},\frac{w^2}{2}\bigr) =\,_0F_1^\alpha\bigl(\mu;\frac{z^2}{4},w^2\bigr)\quad (z,w\in \mathbb C^n),$$
where $\alpha = \frac{1}{k_2}, \,\, z^2:= (z_1^2, \ldots, z_n^2),$ and
$\mu = \mu(k) := k_1 + k_2(n-1) + \frac{1}{2}$. 
\end{proposition}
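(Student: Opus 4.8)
The plan is to recognize both members of the claimed identity as the unique $W(B_n)$-invariant, real-analytic, normalized joint eigenfunction of the commuting algebra of $W$-invariant Dunkl operators, and then to match eigenvalues. First I would record that both candidates are manifestly $W(B_n)$-invariant in the first argument (they are symmetric functions of $z_1^2,\dots,z_n^2$, hence invariant under permutations and sign changes), are analytic, and equal $1$ at $z=0$; the two displayed forms of $_0F_1^\alpha$ coincide at once from the homogeneity $C_\lambda^\alpha(cz)=c^{|\lambda|}C_\lambda^\alpha(z)$ with $c=\tfrac12$ in each slot. On the Dunkl side, $J_k^B(\,\cdot\,,w)=\tfrac1{|W|}\sum_{w'\in W}E_k^B(w'\,\cdot\,,w)$ inherits from $E_k^B(\,\cdot\,,w)$ the property that $p(T(k))\,J_k^B(\,\cdot\,,w)=p(w)\,J_k^B(\,\cdot\,,w)$ for every $W$-invariant polynomial $p$, and by Opdam's theory it is the unique such invariant solution analytic at the origin with value $1$ there. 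For $B_n$ the invariants are generated by the power sums $\sum_i z_i^{2m}$, so this amounts to a system of $n$ commuting differential operators on invariant functions.

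The heart of the matter is the second-order member of this system. On $W(B_n)$-invariant functions the reflection terms in $\Delta_k=\sum_{i=1}^n T_{e_i}(k)^2$ drop out, and after the substitution $u_i=z_i^2$ a direct change of variables gives
\begin{equation*}
L_k \;=\; 4\sum_{i=1}^n u_i\,\partial_{u_i}^2 \;+\;(4k_1+2)\sum_{i=1}^n \partial_{u_i}\;+\;8k_2\!\!\sum_{1\le i<j\le n}\!\!\frac{u_i\partial_{u_i}-u_j\partial_{u_j}}{u_i-u_j},
\end{equation*}
the roots $e_i\pm e_j$ combining into the symmetric difference quotient. Writing the last sum as $\sum_i\big(\sum_{j\ne i}\tfrac{u_i}{u_i-u_j}\big)\partial_{u_i}$ and setting $\alpha=1/k_2$, this is exactly the Laplace-type operator of Kaneko and Baker–Forrester for which $_0F_1^\alpha(\mu;\cdot)$ is an eigenfunction, with $\mu=k_1+k_2(n-1)+\tfrac12$; comparison with $\Delta_k J_k^B(\,\cdot\,,w)=\langle w,w\rangle J_k^B(\,\cdot\,,w)$ identifies the eigenvalue as $\langle w,w\rangle=\sum_i w_i^2$. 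As confirmation I would record the bidegree-$(2,2)$ check: the unique invariant $\tfrac{|z|^2|w|^2}{4n\mu}$ forced by the eigenvalue equation equals the $|\lambda|=1$ term $\tfrac1\mu\cdot\tfrac{(|z|^2/2)(|w|^2/2)}{n}$ of the series, and the combination $(k_1+\tfrac12)+k_2(n-1)=\mu$ is precisely what makes the first-order and pair-interaction contributions collapse to the correct eigenvalue. (The parameter $\alpha$ itself first enters at bidegree $(4,4)$, through $C_{(2)}^\alpha,C_{(1,1)}^\alpha$ and $(\mu)_{(1,1)}^\alpha=\mu(\mu-\tfrac1\alpha)$.)

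To pass from this single equation to the full system I would use the Jack-polynomial description of the joint eigenspaces: in each bidegree $(2m,2m)$ the commuting invariant Dunkl operators are simultaneously diagonalized by the products $C_\lambda^\alpha(z^2)\,C_\lambda^\alpha(w^2)$ with $|\lambda|=m$, and $_0F_1^\alpha$ is exactly the ``diagonal'' combination of these, so it is a joint eigenfunction whose eigenvalue on the $\lambda$-component agrees with $p(w)$ by the standard Jack eigenvalue formulas. Together with the uniqueness statement this yields the identity. The main obstacle is precisely this last step: the Laplacian equation alone cannot suffice, since its eigenvalue $\langle w,w\rangle$ does not separate points $w$ of equal norm while $J_k^B(\,\cdot\,,w)$ genuinely depends on $w$ and not only on $|w|$. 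One therefore needs the entire commuting family (equivalently, the higher Sekiguchi–Debiard/Cherednik operators), and verifying that $_0F_1^\alpha$ is a simultaneous eigenfunction of all of them---rather than merely of $L_k$---is where the real work lies and where I would lean on the Kaneko/Baker–Forrester theory of Jack-hypergeometric functions.
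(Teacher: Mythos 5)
Your computations, as far as they go, are correct: on $W(B_n)$-invariant functions the reflection terms of $\Delta_k$ do vanish, the substitution $u_i=z_i^2$ yields exactly your operator $L_k$, the parameter match $\mu-\tfrac{n-1}{\alpha}=k_1+\tfrac12$, $\tfrac{2}{\alpha}=2k_2$ identifies $\tfrac14 L_k$ with the generalized Bessel operator of the Jack-polynomial theory, and your bidegree-$(2,2)$ check (equivalently $\Delta_k|z|^2=4n\mu$) is right. You are also right that the single Laplacian equation cannot characterize $J_k^B(\cdot,w)$ and that Opdam's uniqueness theorem for the full system of invariant Dunkl operators is the correct framework. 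For comparison, the paper gives no computation at all: it proves the proposition by citing \cite{R2} (Prop.\ 4.5) and \cite{BF1} (Section 6) for real $k_1\geq 0$, followed by analytic continuation in $k_1$ --- a step your proposal never addresses, though your framework could in principle absorb it, since the uniqueness characterization is available for all multiplicities with $\mathrm{Re}\,k\geq 0$.

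The decisive step, however, is both misstated and ultimately not carried out. Your claim that in each bidegree $(2m,2m)$ the commuting invariant Dunkl operators are ``simultaneously diagonalized by the products $C_\lambda^\alpha(z^2)C_\lambda^\alpha(w^2)$'' cannot be correct: for invariant $p$ of positive degree, $p(T(k))$ is homogeneous of degree $-\deg p$ on polynomials, so it strictly lowers degree and its only polynomial eigenfunctions are those it annihilates. The operators whose joint eigenfunctions are Jack polynomials are degree-preserving ones (Sekiguchi--Debiard/Cherednik or Calogero--Moser--Sutherland operators), and transferring their spectral data to the rational invariant Dunkl operators is precisely the nontrivial content that is missing. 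A correct replacement runs through Dunkl's pairing $[p,q]_k=\bigl(p(T(k))q\bigr)(0)$: since $p(T^{(z)}(k))\,J_k^B(z,w)\big\vert_{z=0}=p(w)$ for invariant $p$, the bidegree-$(2m,2m)$ part of $J_k^B$ is the reproducing kernel of $\bigl(\mathcal P_{2m}^W,[\cdot,\cdot]_k\bigr)$, so the proposition is equivalent to the orthogonality of the polynomials $C_\lambda^\alpha(z^2)$, $|\lambda|=m$, in this pairing together with the norm formula $[C_\lambda^\alpha(\,\cdot\,^2),C_\lambda^\alpha(\,\cdot\,^2)]_k=4^{|\lambda|}|\lambda|!\,(\mu)_\lambda^\alpha\,C_\lambda^\alpha({\bf 1})$. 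These two facts --- equivalently, that the series solves Kaneko's full holonomic system rather than merely the second-order equation $L_k$ --- are exactly what \cite{K} and \cite{BF1} establish, and they are what you defer to when you say you would ``lean on'' that theory. So as written, your proposal does not prove the statement: it reduces it, via one false intermediate claim, to the same sources that the paper cites.
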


\begin{proof} See \cite[Propos. 4.5]{R2} and \cite[Section 6]{BF1} 
for real $k_1 \geq0.$ The general case follows by analytic continuation.
\end{proof}


The key to the subsequent Sonine type integral representation for the Bessel function $J_k^B$ is Kadell's generalization of the Selberg integral.
For parameters $\kappa, \mu, \nu  \in \mathbb C$ with $\text{Re}\, \kappa\geq 0$ and $\,\text{Re} \, \mu,  \text{Re}\, \nu > 
\text{Re}\, \kappa (n-1),$ the Selberg integral   is given by 
\begin{align}\label{Selberg-int} \int_{]0,1[^n} &\prod_{j=1}^n  x_j^{\mu-\kappa(n-1)-1}(1-x_j)^{\nu-\kappa(n-1)-1}\!
\prod_{1\leq i<j\leq n} |x_i-x_j|^{2\kappa} dx\notag \\
& =\, \prod_{j=1}^n \frac{\Gamma(1+\kappa j)}{\Gamma(1+\kappa)}\cdot 
\prod_{j=1}^n \frac{\Gamma(\mu-\kappa(j-1))\Gamma(\nu-\kappa(j-1))}{\Gamma(\mu+\nu -\kappa(j-1))} \,:= I_n(\kappa, \mu, \nu)
\end{align}
(see e.g \cite{FW}). With the normalized Selberg density
$$ s_{\mu,\nu}^\kappa(x) := \frac{1}{I_n(\kappa,\mu,\nu)}\cdot \prod_{j=1}^n x_j^{\mu-\kappa(n-1)-1}(1-x_j)^{\nu-\kappa(n-1)-1}\!
\prod_{1\leq i<j\leq n} |x_i-x_j|^{2\kappa}, $$  
Kadell's \cite{Ka} generalization  of the Selberg integral (c.f. also \cite[(2.46)]{FW}) reads
\begin{equation}\label{Kadell} \int_{]0,1[^n} \frac{C_\lambda^\alpha (x)}{C_\lambda^\alpha({{\bf 1}})}\, s_{\mu,\nu}^{1/\alpha}(x)dx \, = \, 
\frac{(\mu)_\lambda^\alpha }{(\mu+\nu)_\lambda^\alpha}.\end{equation}
Formula  
\eqref{Kadell}  implies that for $z\in \mathbb C^n$ and $ \mu, \nu \in \mathbb C$ with $\,\text{Re} \, \mu,  \text{Re}\, \nu > \frac{1}{\alpha}(n-1),$
\begin{equation}\label{Sonine} _0F_1^\alpha(\mu+\nu;z,{\bf 1}) = \int_{]0,1[^n} \!\!\,_0F_1^\alpha(\mu; z,x)\,s_{\mu, \nu}^{1/\alpha}(x)dx.\end{equation}
This is a Sonine formula for $\, _0F_1^\alpha$; in case $n=1$ it reduces to the classical 
Sonine integral \eqref{intrep-1-dim} for one-variable Bessel functions. 

The Sonine formula \eqref{Sonine} translates to the Bessel function of type $B_n$ as follows: 
Let $k = (k_1, k_2)$ with $\text{Re}\, k_1 \geq 0$ and $k_2>0.$ For $h \in \mathbb C$ put 
$$k^\prime(h):= (k_1+h, k_2).$$
Then for $h\in \mathbb C$ 
with $\text{Re}\, h > k_2(n-1)$ and all $z\in \mathbb C^n$,
\begin{equation}\label{density} J_{k^\prime(h)}^B(z,{\bf 1}) = \int_{]0,1[^n} J_k^B(z,x) f_{k,h}(x)dx\end{equation}
with the density
\begin{equation}\label{density_2} f_{k,h}(x) = \frac{2^n}{I_n(k_2, \mu(k),h)} \prod_{j=1}^n (x_j^2)^{k_1} (1-x_j^2)^{h-k_2(n-1)-1} 
\prod_{i<j} |x_i^2-x_j^2|^{2k_2}\end{equation}
and with $\mu(k)$ as in Proposition \ref{hypergeom-bessel-B}.
Note that $f_{k,h}$ is $W(B_n)$-invariant, and therefore
\begin{equation} \label{hilfsdarst} J_{k^\prime(h)}^B(z,{\bf  1}) = \int_{]0,1[^n} E_k^B(z,x) f_{k,h}(x)dx.\end{equation} 
We extend $f_{k,h}$ by zero to a measurable function on $\mathbb R^n$. For $\text{Re}\, h>k_2(n-1)$ we have $f_{k,h}\in L^1_{loc}(\mathbb R^n),$ 
which corresponds to a complex Radon measure
$$d\rho_{k,h}(x) = f_{k,h}(x)dx.$$ 

Now recall from Section \ref{intertwining} the distributions 
$u_x^{k^\prime\!, k}\in \mathcal E^\prime(\mathbb R^n)$ defined by $\, \langle u_{x}^{k^\prime\!,k}, \varphi\rangle = V_{k^\prime\!,k}\, \varphi(x).$

\begin{definition} Consider $k=(k_1, k_2)$ on root system $B_n$ with $\text{Re}\, k_1 \geq 0$ and $k_2 >0$ as above.
For  $h\in \mathbb C$ with $\text{Re}\, h \geq -\text{Re}\,k_1$ denote
by  $S_{k,h}\in \mathcal E^\prime (\mathbb R^n)$ the $W(B_n)$-mean of  $u_{\bf 1}^{k^\prime(h), k},$ i.e.
$$ \langle S_{k,h}, \varphi\rangle := \frac{1}{2^n n!}\!\sum_{w\in W(B_n)}\! \langle u_{\bf 1}^{k^\prime(h),k}, w.\varphi \rangle.$$

\end{definition}

According to Lemma \ref{support}, $S_{k,h}$ is supported  in the Euclidean ball $B_{\sqrt n}(0),$ and it is $W(B_n)$-invariant. Thus in view of 
\eqref{dist_Sonine}, we have the following distributional extension of the Sonine formula \eqref{density}: 
\begin{equation}\label{Sonine_rep}
J_{k^\prime(h)}^B(z, {\bf  1}) = \langle S_{k,h}, J_k^B(z, \,.\,)\rangle = \langle S_{k,h}, E_k^B(z, \,.\,)\rangle,\quad z \in \mathbb C^n.
\end{equation}

The next result is a simple criterion for the existence of a Sonine-type integral representation 
for the  Bessel function of type $B_n$.

\begin{proposition} Let $k = (k_1, k_2)$ with $k_2> 0$ and $\text{Re}\, k_1 \geq 0.$ Then for  ${\rm Re}\, h\geq -{\rm Re}\,k_1\,$ the following are equivalent:
\begin{enumerate}
\item[\rm{(1)}] The distribution $S_{k,h}$ is a complex (positive) measure.   
\item[\rm{(2)}] There exists a bounded complex (positive) Radon 
measure $m\in M_b(\mathbb R^n)$ such that the following Sonine formula holds:
$$ J_{k^\prime(h)}^B(i\xi, {\bf 1}) = 
\int_{\mathbb R^n} J_{k}^B (i\xi, x)\, dm(x) \quad \text{for all } \xi \in \mathbb R^n.$$ 
\end{enumerate}
In this case, the measure $m$ in (2) is unique and given by $\,m = S_{k,h}.$ 

\end{proposition}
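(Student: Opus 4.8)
The plan is to prove the equivalence by exploiting the machinery already established for the distributions $u_x^{k',k}$ and the Dunkl-Laplace transform. The key observation is that the $W(B_n)$-averaged distribution $S_{k,h}$ plays the role of a representing distribution for the Bessel-function Sonine formula, exactly as $u_{\bf 1}^{k'(h),k}$ does for the Dunkl-kernel formula \eqref{dist_Sonine}. Since $S_{k,h}$ is $W(B_n)$-invariant and supported in $\overline B_{\sqrt n}(0)$ by Lemma \ref{support}, it is a compactly supported (hence tempered) distribution, and formula \eqref{Sonine_rep} already records that $\langle S_{k,h}, E_k^B(z,\cdot)\rangle = J_{k'(h)}^B(z,{\bf 1})$ for all $z\in\mathbb C^n$.

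First I would address the implication $(1)\Rightarrow(2)$. If $S_{k,h}$ is a complex (positive) measure, then since it is compactly supported it is automatically a bounded complex (positive) Radon measure $m\in M_b(\mathbb R^n)$. Setting $z=i\xi$ in \eqref{Sonine_rep} and using that $S_{k,h}$ is $W(B_n)$-invariant so that averaging $E_k^B$ over $W(B_n)$ produces $J_k^B$, one obtains
$$ J_{k'(h)}^B(i\xi,{\bf 1}) = \langle S_{k,h}, E_k^B(i\xi,\cdot)\rangle = \int_{\mathbb R^n} J_k^B(i\xi,x)\,dm(x)$$
for all $\xi\in\mathbb R^n$, which is precisely the Sonine formula in (2).

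For the converse $(2)\Rightarrow(1)$ and the uniqueness assertion, I would invoke the injectivity results of Corollary \ref{laplace_injectivity}. The natural strategy is to interpret \eqref{Sonine_rep} as an identity of Dunkl-Laplace transforms and to compare it against the assumed measure $m$. Writing the Bessel function as the $W(B_n)$-mean of the Dunkl kernel, the hypothesis in (2) says that the Dunkl-Laplace transform $\mathcal L_k m$, after $W$-symmetrization, agrees with $\mathcal L_k S_{k,h}$ at all purely imaginary arguments $i\xi$. Here I expect the main obstacle: the hypothesis in (2) involves only the $W$-symmetrized kernel $J_k^B$, not the full Dunkl kernel $E_k^B$, so Corollary \ref{laplace_injectivity}(2) cannot be applied to $m$ directly. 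The clean way around this is to first symmetrize $m$ to its $W(B_n)$-mean $\widetilde m$ and observe that both $S_{k,h}$ and $\widetilde m$ are $W$-invariant distributions whose pairings with $E_k^B(i\xi,\cdot)$ coincide for all $\xi$ — indeed, for $W$-invariant measures, pairing with $E_k^B$ equals pairing with $J_k^B$, so the assumption forces $\langle \widetilde m, E_k^B(i\xi,\cdot)\rangle = \langle S_{k,h}, E_k^B(i\xi,\cdot)\rangle$ for all $\xi\in\mathbb R^n$. Then Corollary \ref{laplace_injectivity}(2), with $u = S_{k,h}$ and the measure $\widetilde m$, yields $\widetilde m = S_{k,h}$; in particular $S_{k,h}$ is a complex (positive) measure, giving (1). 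Finally, since $S_{k,h}$ is itself $W$-invariant, the same argument shows that $m$ must already equal $S_{k,h}$ (any representing $m$ satisfies $\widetilde m = S_{k,h}$, and comparing Dunkl transforms of the unsymmetrized $m$ against $S_{k,h}$ via Corollary \ref{laplace_injectivity}(2) pins down $m = S_{k,h}$ uniquely), establishing uniqueness and the stated formula $m = S_{k,h}$. The positivity variant follows verbatim, using that a positive distribution is represented by a positive measure.
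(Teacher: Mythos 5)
Your proof of the equivalence $(1)\Leftrightarrow(2)$ is correct and follows essentially the same route as the paper: $(1)\Rightarrow(2)$ is read off from \eqref{Sonine_rep}, and for $(2)\Rightarrow(1)$ you pass to the $W(B_n)$-mean $\widetilde m$ of $m$, use the $W(B_n)$-invariance of $\widetilde m$ to trade the pairing with $J_k^B(i\xi,\,.\,)$ for the pairing with $E_k^B(i\xi,\,.\,)$, and apply Corollary \ref{laplace_injectivity}(2) with $u=S_{k,h}$ to get $\widetilde m = S_{k,h}$. This is exactly the paper's argument, which opens the converse direction with ``we may assume that $m$ is $W(B_n)$-invariant''; you also correctly diagnosed why this symmetrization is necessary.

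The defect is your closing parenthetical claim that ``comparing Dunkl transforms of the unsymmetrized $m$ against $S_{k,h}$ via Corollary \ref{laplace_injectivity}(2) pins down $m=S_{k,h}$ uniquely''. This is precisely the step you had (rightly) declared impossible two sentences before, and it does fail: Corollary \ref{laplace_injectivity}(2) requires as hypothesis that $\mathcal L_k S_{k,h}(i\xi) = \int_{\mathbb R^n} E_k^B(x,-i\xi)\,dm(x)$ for all $\xi$, i.e.\ control of the pairing of $m$ with the full, non-symmetrized Dunkl kernel, while hypothesis (2) only controls $\int_{\mathbb R^n} J_k^B(i\xi,x)\,dm(x)$, and hence only the $W(B_n)$-mean of $m$. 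Literal uniqueness among all bounded measures is in fact false: since $J_k^B(i\xi,\,.\,)$ is $W(B_n)$-invariant in its second argument, every bounded measure whose $W(B_n)$-mean equals $S_{k,h}$ satisfies the formula in (2). A concrete example is furnished by the paper itself: for ${\rm Re}\,h > k_2(n-1)$, formula \eqref{density} shows that the measure $\rho_{k,h}$, which is supported in $[0,1]^n$ and is not $W(B_n)$-invariant, represents $J_{k^\prime(h)}^B(i\xi,{\bf 1})$ in the sense of (2), and so does its $W(B_n)$-mean; these are distinct measures. Accordingly, the uniqueness assertion must be read, as in the paper's proof, within the class of $W(B_n)$-invariant measures. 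Your argument through $\widetilde m = S_{k,h}$ already proves exactly this, so the correct fix is to delete the parenthetical and state uniqueness as: every representing measure $m$ in (2) has $W(B_n)$-mean equal to $S_{k,h}$; in particular, $S_{k,h}$ is the unique $W(B_n)$-invariant representing measure.
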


\begin{proof} The implication $(1) \Rightarrow (2)\,$ is immediate from identity \eqref{Sonine_rep}. For the converse direction, note first
that we may assume that $m$ is $W(B_n)$-invariant.
 Thus from \eqref{Sonine_rep}, we obtain that
 $$ \langle S_{k,h}\,,  E_k^B(i\xi, \,.\,)\rangle = \langle m, J_k^B(i\xi, \,.\,)\rangle = \langle m,  E_k^B(i\xi, \,.\,)\rangle 
 \quad \text{for all } \xi \in \mathbb R^n.$$
Corollary \ref{laplace_injectivity}(2) for the Laplace transform  now implies that $\,m = S_{k,h}$. 
 \end{proof}
 
Identity \eqref{Sonine_rep}
together with \eqref{hilfsdarst} and 
the injectivity of the Dunkl  Laplace transform (Corollary \ref{laplace_injectivity})  imply that for $\text{Re} \, h>k_2(n-1),$
$$ S_{k,h} = \rho_{k,h}.$$ 
We are interested to know for which range of $h$ the distribution $S_{k,h}$ is actually a complex Radon measure, i.e. of order zero. 
The following useful observation of Sokal \cite[Lemmata 2.1, 2.2 and
Proposition 2.3]{S}) will provide a necessary condition.


\begin{lemma}\label{Sokal}
Let $\Omega\subseteq \mathbb R^n$ be open and $D\subseteq \mathbb C$ open and connected. Suppose that 
$$F:\Omega\times D\to\mathbb C, \quad (x,\lambda)\to f_\lambda(x):=F(x,\lambda)$$
is a continuous function such that $F(x,.)$ is analytic on $D$ for
 each $x\in \Omega$. Extend $f_\lambda$ by zero to all of $\mathbb R^n$  and define $\,u_\lambda \in \mathcal D^{\prime}(\Omega)$ by
 \[ \langle u_\lambda,\varphi\rangle = \int_\Omega \varphi(x) f_\lambda(x) dx.\]
 Then the following hold:
\begin{enumerate}\itemsep=-1pt
 \item[\rm{(1)}] The map $\,\lambda\mapsto u_\lambda\,, \,\, D \to 
 \mathcal D^\prime(\Omega)  $ is weakly analytic, which means that  
$\lambda\mapsto \langle u_\lambda, \varphi\rangle $
is analytic for all $\varphi \in\mathcal D(\Omega)$.
\item[\rm{(2)}]  Let  $D_0\subseteq D$ be a nonempty open set, and suppose that there is a weakly  analytic map 
  $\lambda\mapsto \widetilde u_\lambda, \,\, D \to \mathcal D^\prime(\mathbb R^n)$  
such that for each $\lambda\in D_0$ the distribution $\widetilde u_\lambda$ extends the distribution  $u_\lambda$ from $\Omega$ to  $\mathbb R^n$. Then  $\widetilde u_\lambda$ extends $u_\lambda$
for each $\lambda\in D$. 
Moreover, if $\,\widetilde u_\lambda$ is a complex Radon measure on $\mathbb R^n$, then  $f_\lambda$ belongs to
$L^1_{loc}(\overline\Omega)$. This means that $f_\lambda$ is integrable over a sufficiently small neighborhood in $\mathbb R^n$ of any point $x\in \overline \Omega.$
 \end{enumerate}
\end{lemma}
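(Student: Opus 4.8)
The plan is to prove the two parts of Lemma \ref{Sokal} separately, treating (1) as a standard analyticity-under-the-integral argument and (2) as the substantive assertion requiring an extension/rigidity argument together with a local integrability conclusion.

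\medskip

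For part (1), I would fix $\varphi \in \mathcal D(\Omega)$ with compact support $K \subseteq \Omega$, and show that $g(\lambda) := \langle u_\lambda, \varphi\rangle = \int_K \varphi(x) F(x,\lambda)\, dx$ is analytic on $D$. The two ingredients are joint continuity of $F$ (which bounds $|F|$ uniformly on $K\times L$ for any compact $L\subseteq D$, so the integral converges and is continuous in $\lambda$) and analyticity of $F(x,\cdot)$ for each fixed $x$. Analyticity of $g$ then follows by the usual Morera--Fubini device: for any closed triangle $\Delta \subseteq D$ one has $\oint_{\partial \Delta} g(\lambda)\, d\lambda = \int_K \varphi(x)\bigl(\oint_{\partial\Delta} F(x,\lambda)\, d\lambda\bigr) dx = 0$, where the interchange of integrals is justified by continuity on the compact set $K\times \partial\Delta$ and the inner contour integral vanishes by Cauchy's theorem. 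By Morera's theorem $g$ is analytic, which is exactly weak analyticity of $\lambda \mapsto u_\lambda$.

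\medskip

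For part (2), the first claim is a rigidity statement: two weakly analytic families of distributions on $\mathbb R^n$ that agree on a nonempty open subset $D_0 \subseteq D$ must agree on all of $D$. I would argue pointwise against test functions: fix $\varphi \in \mathcal D(\mathbb R^n)$ and consider the two analytic functions $\lambda \mapsto \langle \widetilde u_\lambda, \varphi\rangle$ and $\lambda \mapsto \langle u_\lambda, \varphi\rangle$ (the latter a priori defined via the given $L^1_{loc}(\Omega)$ representatives, but both analytic on $D$ by part (1) applied to $\widetilde u_\lambda$ and to $u_\lambda$). For $\lambda \in D_0$ and $\varphi$ supported in $\Omega$, the extension hypothesis gives $\langle \widetilde u_\lambda,\varphi\rangle = \langle u_\lambda, \varphi\rangle$; by the identity theorem on the connected set $D$, this equality of analytic functions persists for all $\lambda \in D$. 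Hence $\widetilde u_\lambda\big|_\Omega = u_\lambda$ throughout $D$, i.e. $\widetilde u_\lambda$ extends $u_\lambda$ for every $\lambda \in D$.

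\medskip

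The final and genuinely delicate assertion is the local integrability conclusion: if $\widetilde u_\lambda$ is a complex Radon measure on $\mathbb R^n$, then $f_\lambda \in L^1_{loc}(\overline\Omega)$. The point is that $\widetilde u_\lambda$ restricted to $\Omega$ equals the measure $f_\lambda\, dx$, and we wish to control $f_\lambda$ up to the boundary $\partial\Omega$. I would fix a point $x_0 \in \overline\Omega$ and a small ball $B$ around it, take a sequence of nonnegative test functions $\varphi_j \in \mathcal D(B)$ increasing pointwise to $\mathbf 1_B$, and use that $\langle \widetilde u_\lambda, \varphi_j\rangle$ is bounded by the total variation $\|\widetilde u_\lambda\|(B)$ of the measure on $B$. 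Since $\varphi_j f_\lambda \geq 0$ agrees with the integrand for $\widetilde u_\lambda$ on $B\cap\Omega$, monotone convergence yields $\int_{B\cap\Omega} |f_\lambda|\, dx \leq \|\widetilde u_\lambda\|(B) < \infty$; extending $f_\lambda$ by zero off $\Omega$ then gives integrability over the full neighborhood $B$ of $x_0$. The main obstacle here is the careful handling of the boundary: $f_\lambda$ need not be locally integrable near $\partial\Omega$ for a general $\lambda$, and the leverage provided by the hypothesis that $\widetilde u_\lambda$ is a \emph{measure} (rather than a distribution of positive order) is precisely what forbids a non-integrable boundary singularity. I expect this passage to the total-variation bound, together with the sign bookkeeping needed to apply monotone convergence to the possibly complex density $f_\lambda$ (by treating real and imaginary parts, or $|f_\lambda|$ directly against $\varphi_j$), to be the technically subtlest step, and it is exactly this conclusion that will later rule out positive or complex Sonine measures for $h$ outside the admissible set.
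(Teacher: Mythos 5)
A preliminary remark on the comparison itself: the paper does not prove this lemma at all — it is imported wholesale from Sokal \cite[Lemmata 2.1, 2.2 and Proposition 2.3]{S} — so your attempt can only be measured against Sokal's original arguments. Your part (1) (Morera--Fubini on $K\times\partial\Delta$) and the first claim of part (2) (identity theorem applied, for fixed $\varphi\in\mathcal D(\Omega)$, to the two analytic functions $\lambda\mapsto\langle\widetilde u_\lambda,\varphi\rangle$ and $\lambda\mapsto\langle u_\lambda,\varphi\rangle$ on the connected set $D$) are correct and follow essentially the same route as the cited source.

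The gap is in the final step, the $L^1_{loc}(\overline\Omega)$ conclusion. Your mechanism — nonnegative $\varphi_j\in\mathcal D(B)$ increasing to $\mathbf 1_B$, then monotone convergence — only functions when $f_\lambda\geq 0$: the pivotal line ``$\varphi_j f_\lambda\geq 0$'' is false (indeed meaningless) for the complex-valued densities the lemma allows, and the paper genuinely needs that generality, since the densities $f_{k,h}$ of Section \ref{B_n} are complex for complex $k_1$ and $h$. The hedges you propose do not repair this. Splitting into real/imaginary and positive/negative parts fails because the pairing $\langle\widetilde u_\lambda,\varphi_j\rangle=\int\varphi_j f_\lambda\,dx$ controls only the \emph{difference} of the positive and negative parts; monotone convergence cannot be run on each part separately, as no admissible test function localizes on the (non-open) set $\{f_\lambda>0\}$. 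And ``pairing $|f_\lambda|$ directly against $\varphi_j$'' is not a quantity accessible through $\widetilde u_\lambda$ at all. The correct repair is the total-variation duality: for $\varphi\in\mathcal D(B\cap\Omega)$ with $\|\varphi\|_\infty\leq 1$ one has
$\bigl|\int\varphi f_\lambda\,dx\bigr|=\bigl|\int\varphi\,d\widetilde u_\lambda\bigr|\leq|\widetilde u_\lambda|(\overline B)$,
and taking the supremum over all such $\varphi$ yields exactly $\int_{B\cap\Omega}|f_\lambda|\,dx\leq|\widetilde u_\lambda|(\overline B)<\infty$; equivalently, since $\widetilde u_\lambda|_\Omega$ and $f_\lambda\,dx$ are Radon measures on $\Omega$ agreeing on $\mathcal D(\Omega)$, they agree as measures, so $|\widetilde u_\lambda|$ has density $|f_\lambda|$ on $\Omega$. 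Note also that your $\varphi_j$ are supported merely in $B$, not in $B\cap\Omega$, so $\langle\widetilde u_\lambda,\varphi_j\rangle$ picks up contributions from whatever mass $\widetilde u_\lambda$ carries on $B\setminus\Omega$ — for example mass concentrated on $\partial\Omega$, which is precisely the phenomenon the paper highlights at the discrete Wallach-type parameters — so the test functions must be taken with support in $B\cap\Omega$ for the identification of the pairing with $\int\varphi f_\lambda\,dx$ to be legitimate in the first place.
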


Let us mention at this point that for certain values of $\lambda\in D\setminus D_0$, it may happen that $f_\lambda$ is identical zero while $\widetilde u_\lambda$ is a nonzero measure concentrated on
the boundary of $\Omega$. A typical example are the Riesz distributions on symmetric cones (see \cite{S}), where this phenomenon occurs in the discrete points of the Wallach set. 

\medskip
To apply Lemma \ref{Sokal} to our situation, 
fix $k=(k_1, k_2)$ with $\text{Re}\, k_1\geq 0$ and $k_2 >0$ and put 
$$ D:= \{ h\in \mathbb C: \text{Re}\, h> - \text{Re}\, k_1\}, \quad D_0:= \{h\in \mathbb C: \text{Re}\, h > k_2(n-1)\}.$$

\begin{theorem} \label{main}
\begin{enumerate}
 \item The mapping $h \mapsto S_{k,h}$ is weakly analytic on $D$. 
 \item Let $h\in D$ and suppose that $S_{k,h}$ is a complex Radon measure on $\mathbb R^n$.  Then either $h \in D_0,$ in which case $S_{k,h}= \rho_{k,h}$, or 
  $h$ is contained in the discrete set $\, \{0,k_2, \ldots, k_2(n-1)\} - \mathbb Z_+.$ 
  \item Suppose that $k_1$ is real and $S_{k,h}$ is a positive Radon measure, then in addition to the condition in (2), $h$ must be real.

  \end{enumerate} 
 
\end{theorem}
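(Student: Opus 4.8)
The plan is to prove the three assertions of Theorem \ref{main} in order, leveraging the distributional Sonine representation \eqref{Sonine_rep} together with Lemma \ref{Sokal} (Sokal's observation) and the explicit Selberg/Kadell density \eqref{density_2}.

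\smallskip

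For part (1), I would first establish weak analyticity of $h\mapsto S_{k,h}$ on $D$. Since $S_{k,h}$ is the $W(B_n)$-mean of $u_{\bf 1}^{k^\prime(h),k}$, and by definition $\langle u_{\bf 1}^{k^\prime(h),k},\varphi\rangle = V_{k^\prime(h),k}\,\varphi({\bf 1})$, it suffices to show that $h\mapsto V_{k^\prime(h),k}\,\varphi({\bf 1})$ is analytic for each test function $\varphi\in\mathcal D(\mathbb R^n)$. This is precisely the content of Lemma \ref{intertwiner_analytic}: for $\varphi\in\mathcal S(\mathbb R^n)$ the map $k^\prime\mapsto V_{k^\prime,k}\,\varphi(x)$ is analytic on $\{\mathrm{Re}\,k^\prime>0\}$. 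Writing $k^\prime(h)=(k_1+h,k_2)$ and noting that $\mathrm{Re}\,h>-\mathrm{Re}\,k_1$ (the defining condition of $D$) forces $\mathrm{Re}(k_1+h)>0$, the hypothesis of Lemma \ref{intertwiner_analytic} is met, so analyticity in $h$ follows directly. Taking the finite $W(B_n)$-average preserves analyticity, giving part (1).

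\smallskip

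For part (2), the idea is to apply Lemma \ref{Sokal} with $\Omega$ the open box $]0,1[^n$ (or better, the $W(B_n)$-orbit of it inside the ball $B_{\sqrt n}(0)$), $f_h = f_{k,h}$ the Selberg density from \eqref{density_2}, and the analytic family $\widetilde u_h := S_{k,h}$. On $D_0=\{\mathrm{Re}\,h>k_2(n-1)\}$ we already know $S_{k,h}=\rho_{k,h}$, i.e.\ $\widetilde u_h$ restricted to $\Omega$ agrees with $u_h$ defined by integration against $f_{k,h}$. By part (1), $h\mapsto S_{k,h}$ is weakly analytic on all of $D$, so the hypotheses of Lemma \ref{Sokal}(2) hold. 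Hence if $S_{k,h}$ is a complex Radon measure for some $h\in D$, then by the lemma $f_{k,h}\in L^1_{loc}(\overline\Omega)$, meaning $f_{k,h}$ must be locally integrable up to the boundary of the box, in particular near the walls $x_j=1$ (where the factor $(1-x_j^2)^{h-k_2(n-1)-1}$ lives) and near the diagonal walls $x_i=x_j$ and the coordinate hyperplanes $x_j=0$. The key analytic step is then to examine the boundary behavior of $f_{k,h}$: local integrability near $x_j=1$ requires $\mathrm{Re}(h-k_2(n-1)-1)>-1$, i.e.\ $\mathrm{Re}\,h>k_2(n-1)$, which is exactly the condition $h\in D_0$. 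Therefore if $h\notin D_0$ the density cannot be locally integrable up to the boundary \emph{unless} $f_{k,h}$ degenerates. The resolution is that for $h$ in the exceptional discrete set $\{0,k_2,\ldots,k_2(n-1)\}-\mathbb Z_+$, one of the Gamma factors in the normalizing constant $I_n(k_2,\mu(k),h)$ has a pole (via $\Gamma(h-k_2(j-1))$ or the $\Gamma(\mu+\nu-\kappa(j-1))$-type factors in \eqref{Selberg-int}), which forces the prefactor $1/I_n(k_2,\mu(k),h)$ to vanish and hence $f_{k,h}\equiv 0$ there, so $S_{k,h}$ may be a measure concentrated on the boundary (the phenomenon flagged in the remark following Lemma \ref{Sokal}). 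Putting these together: for $h\in D$ with $S_{k,h}$ a complex measure, either $h\in D_0$ (and then $S_{k,h}=\rho_{k,h}$), or $h$ lies in the exceptional set where the Selberg constant blows up, which is precisely $\{0,k_2,\ldots,k_2(n-1)\}-\mathbb Z_+$.

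\smallskip

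For part (3), I would argue that positivity forces reality of $h$ by evaluating the Sonine representation against real arguments. Assuming $k_1$ real, the Bessel function $J_k^B(i\xi,x)$ is real-valued for real $\xi,x$ (it is a $W$-symmetrized Dunkl kernel with a convergent real power series in $\xi^2$), whereas $J_{k^\prime(h)}^B(i\xi,{\bf 1})$ with the same structure is real precisely when $\mu(k^\prime(h))=k_1+h+k_2(n-1)+\tfrac12$ is real, i.e.\ when $h$ is real. If $S_{k,h}$ were a positive (hence real) measure, then the right-hand side $\int J_k^B(i\xi,x)\,dS_{k,h}(x)$ of \eqref{Sonine_rep} would be real for all real $\xi$, forcing $J_{k^\prime(h)}^B(i\xi,{\bf 1})$ to be real, and thus $h\in\mathbb R$. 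The main obstacle I anticipate is in part (2): making rigorous that the degeneracy of $f_{k,h}$ exactly accounts for the exceptional discrete set requires a careful tracking of which Gamma poles in $I_n(k_2,\mu(k),h)$ survive the cancellation and correspond to genuine boundary-supported measures versus spurious values, so the bookkeeping of the zeros and poles of the normalizing constant, combined with the sharp boundary-integrability threshold for $f_{k,h}$, is the technical heart of the argument.
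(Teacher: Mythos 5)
Your proposal is correct and, for parts (1) and (2), follows essentially the same route as the paper: part (1) is exactly the paper's application of Lemma \ref{intertwiner_analytic} (plus the harmless $W(B_n)$-average), and part (2) is the paper's combination of Lemma \ref{Sokal} with the boundary-integrability analysis of the Selberg density \eqref{density_2} and the pole set of $I_n(k_2,\mu(k),\,.\,)$. Two remarks. First, a detail in (2): the poles of $h\mapsto I_n(k_2,\mu(k),h)$ in $D$ come only from the numerator factors $\Gamma(h-jk_2)$, $j=0,\ldots,n-1$; the denominator factors $\Gamma\bigl(k_1+jk_2+h+\frac12\bigr)$ have arguments of positive real part throughout $D$, so they contribute neither poles nor (what matters more) zeros of $I_n$ --- the absence of zeros is what makes $h\mapsto f_{k,h}(x)$ extend analytically to all of $D$, and this should be recorded \emph{before} invoking Lemma \ref{Sokal}, since the lemma requires the family $u_h$ (integration against $f_{k,h}$) to be defined and weakly analytic on all of $D$, not just on $D_0$; your parenthetical suggestion that the $\Gamma(\mu+\nu-\kappa(j-1))$-type factors could produce poles has the roles of numerator and denominator reversed. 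Second, your part (3) genuinely deviates from the paper: the paper deduces (3) directly from (2) (the discrete set is real since $k_2$ is, and for $h\in D_0$ positivity of the density $f_{k,h}$ forces ${\rm Im}\,h=0$), whereas you argue via realness of $J_k^B(i\xi,x)$ and of the Sonine integral \eqref{Sonine_rep}. Your route is also valid and self-contained, but the asserted step that $J_{k^\prime(h)}^B(i\xi,{\bf 1})$ real for all real $\xi$ forces $h$ real needs justification; it follows, e.g., by noting that a real-analytic function which is real-valued on $\mathbb R^n$ has real Taylor coefficients at $0$, and the $|\lambda|=1$ term of the $\,_0F_1$-series equals $-|\xi|^2/\bigl(4(\mu(k)+h)\bigr)$, so $\mu(k)+h$, hence $h$, must be real.
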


\begin{proof} (1) Let $\varphi \in \mathcal D(\mathbb R^n)$. By Lemma \ref{intertwiner_analytic}, the 
mapping $\,k^\prime \mapsto V_{k^\prime\!, k}\,\varphi({\bf  1}) = u_{\bf 1}^{k^\prime\!,k}(\varphi)$ is analytic on 
$\{\text{Re}\, k^\prime >0\},$ and therefore
$\,\langle S_{k,h}, \varphi\rangle $ depends analytically on $h\in D.$ 

(2) Consider the integral formula  \eqref{density} with density \eqref{density_2}, which  is valid for $h\in D_0$.   The function  
 $$h\mapsto I_n(k_2, \mu(k),h)= \prod_{j=1}^n \frac{\Gamma(1+jk_2)}{\Gamma(1+k_2)} \cdot \prod_{j=0}^{n-1} 
   \frac{\Gamma\bigl(k_1+jk_2 +\frac{1}{2}\bigr)\Gamma(h-jk_2)} {\Gamma\bigl(k_1+jk_2 +h + \frac{1}{2}\bigr)}$$
 (c.f. \eqref{Selberg-int}) extends to a meromorphic function on $D$ without zeroes and with pole set
 $$ D\cap\bigl(\{ 0, k_2, \ldots, k_2(n-1)\} - \mathbb Z_+\bigr).$$ Thus the function $h\mapsto f_{k,h}(x)$ extends analytically 
 to $D$ for each $x\in ]0,1[^n.$ 
 If $h\in D$ is such that $S_{k,h}$ is a complex Radon measure on $\mathbb R^n$,  then it follows from Lemma \ref{Sokal} that $f_{k,h}\in L^1_{loc}([0,1]^n).$ 
 This in turn is satisfied exactly if one of the following two conditions is fulfilled:
 \begin{itemize}\itemsep=-1pt 
  \item[\rm{(i)}]  $\text{Re}\, h > k_2(n-1);$
  \item[\rm{(ii)}] $h$ is a pole of the function $I_n(k_2, \mu(k),\,.\,),$ in which case $f_{k,h}$ is identical zero on $]0,1[^n.$
 \end{itemize}

 
Finally, part (3) is immediate from part (2).
 
\end{proof}

\begin{remark} In the situation of part (2)(ii), where $f_{k,h}$ is identical zero, the distribution $S_{k,h}$ is not equal to zero, which follows from \eqref{Sonine_rep} and the fact that $J_{k^\prime(h)}^B(0,{\bf 1}) = 1$. Thus $S_{k,h}$ is no longer represented by $f_{k,h}$. We conjecture that for $h\in \{0,k_2, \ldots, k_2(n-1)\},$  the distribution $S_{k,h}$  
is a (positive) measure which is supported in the boundary of $[0,1]^n$, and that also  the intertwiner $V_{k^\prime(h),k}$ is positive for these discrete values of $h$. 
\end{remark}

As an important  consequence of the previous results, we obtain that in the $B_n$-case
and for arbitrary multiplicity $k$ with $k_1 \geq 0$ and $k_2>0$  there 
exist multiplicities $k^\prime = (k_1 +h, k_2) \geq k$ such that the Bessel function $J_{k^\prime} ^B$ has no Sonine integral representation 
with respect
to $J_k^B,$ and that the intertwiner $V_{k^\prime\!, k}$ is not positive. More precisely, the following holds.

\begin{corollary}\label{main_Bessel} Let $k=(k_1, k_2)\in \mathbb C^2$ with  $k_2 >0, \, \text{Re}\, k_1 \geq 0$ 
and consider $k^\prime = (k_1+h,k_2)$ with $\text{Re}\, h > -\text{Re}\, k_1.$ 
\begin{enumerate}
\item Suppose that there exists a bounded complex Radon measure $m\in M_b(\mathbb R^n)$ such that the Sonine formula
\begin{equation}\label{Sonine_1} J_{k^\prime}^B(i\xi, {\bf 1}) = \int_{\mathbb R^n}  J_{k}^B(i\xi,x) dm(x)\end{equation}
holds for all $\xi \in \mathbb R^n$. Then either $\text{Re}\, h>k_2(n-1),$ in which case \eqref{Sonine_1} holds with  $m = \rho_{k,h}$, or $\,h $ is contained in  $\,\{0, k_2, \ldots, k_2(n-1)\} -\mathbb Z_+\,.$ 
If $k_1 \geq 0$ and $m$ is positive, then in addition $h$ must be real. 
\item Suppose that $k_1\geq 0$ and that $V_{k^\prime\!, k}$ is positive. Then 
$h$ is contained in the set
$$ \Sigma(k_2):= \,]k_2(n-1), \infty [ \, \cup \, \bigl(\{0, k_2, \ldots, k_2(n-1)\} -\mathbb Z_+\bigr).$$
\end{enumerate} 
\end{corollary}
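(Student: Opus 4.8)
The plan is to read Corollary~\ref{main_Bessel} as a direct assembly of the structural results already in hand, namely Theorem~\ref{main} together with the criterion stated just before Lemma~\ref{Sokal} (the equivalence between the existence of a Bessel Sonine formula and the distribution $S_{k,h}$ being a measure). Part~(1) is essentially a reformulation of Theorem~\ref{main} in terms of the representing measure $m$ of the Bessel function, while Part~(2) is obtained from Part~(1) by converting positivity of $V_{k^\prime\!,k}$ into the existence of a \emph{positive} measure in the Bessel Sonine formula.

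For Part~(1) I would start from the hypothesis that \eqref{Sonine_1} holds with some $m\in M_b(\mathbb R^n)$. By the criterion preceding Lemma~\ref{Sokal}, applied with this (possibly positive) measure $m$, the distribution $S_{k,h}$ is then a complex (respectively positive) Radon measure and is identified as $m=S_{k,h}$; this identification rests on the injectivity of the Dunkl--Laplace transform, Corollary~\ref{laplace_injectivity}(2), after replacing $m$ by its $W(B_n)$-mean. Once $S_{k,h}$ is known to be a measure, Theorem~\ref{main}(2) gives the dichotomy: either $\text{Re}\,h>k_2(n-1)$, in which case $S_{k,h}=\rho_{k,h}$ and hence $m=\rho_{k,h}$, or $h$ lies in the discrete set $\{0,k_2,\ldots,k_2(n-1)\}-\mathbb Z_+$. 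If moreover $k_1\geq 0$ is real and $m$ is positive, then $S_{k,h}$ is positive and Theorem~\ref{main}(3) forces $h$ to be real as well.

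For Part~(2) I would first translate positivity of $V_{k^\prime\!,k}$ into the hypothesis of Part~(1). By the positivity equivalence Lemma (its conditions (1)--(3)), positivity of $V_{k^\prime\!,k}$ produces a probability measure $\mu_{\bf 1}^{k^\prime\!,k}$ representing the Dunkl kernel at the reference point ${\bf 1}$ as in \eqref{Sonine_Dunkl_L}; forming its $W(B_n)$-mean gives a $W(B_n)$-invariant probability measure $m$, which is in particular a positive element of $M_b(\mathbb R^n)$ and which satisfies the Bessel Sonine formula \eqref{Sonine_B} at ${\bf 1}$, i.e.\ \eqref{Sonine_1} with this positive $m$. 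Applying Part~(1) with $k_1\geq 0$ real and $m$ positive, I conclude that $h$ is real and satisfies either $h>k_2(n-1)$ or $h\in\{0,k_2,\ldots,k_2(n-1)\}-\mathbb Z_+$; the union of these two possibilities is precisely $\Sigma(k_2)$.

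The genuinely delicate content does not reside in the corollary but in Theorem~\ref{main}, whose proof via Sokal's Lemma~\ref{Sokal} excludes all parameters $h$ with $\text{Re}\,h\leq k_2(n-1)$ except the poles of the Selberg normalizing constant $I_n(k_2,\mu(k),\,\cdot\,)$. Within the corollary itself the only point needing care is the passage from the Dunkl-kernel formula to the $W(B_n)$-invariant Bessel formula, together with the identification $m=S_{k,h}$ via injectivity of the Dunkl--Laplace transform; this guarantees uniqueness of the representing measure and ensures that no admissible $h$ is overlooked, even at the discrete exceptional values, where $S_{k,h}$ may be a nonzero measure supported on the boundary of $[0,1]^n$ rather than the vanishing density $f_{k,h}$.
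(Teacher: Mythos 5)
Your proposal is correct and coincides with the paper's (implicit) proof: the corollary is indeed just the assembly of the unnamed Proposition identifying $m=S_{k,h}$ via injectivity of the Dunkl--Laplace transform, Theorem \ref{main}(2),(3), and, for part (2), the positivity-equivalence Lemma of Section \ref{intertwining} followed by taking $W(B_n)$-means to pass from the Dunkl-kernel representation at ${\bf 1}$ to the Bessel Sonine formula \eqref{Sonine_1}. The paper states the corollary as "an important consequence of the previous results" without writing out these steps, and your write-up supplies exactly the intended chain of reasoning.
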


\begin{remarks}
(1)
The set $\Sigma(k_2)$ is closely related with the so-called Wallach set
$$ ]d(n-1), \infty [ \, \cup \, \{0, d, \ldots, d(n-1)\},$$
where $d\in \mathbb N$ is the  Peirce constant of a symmetric cone. The Wallach set plays an important role in the 
analysis on symmetric cones, see \cite{FK} for some background. It describes the set of parameters for which
 Riesz distributions on a symmetric cone are actually positive measures, a result which is due to Gindikin \cite{G}.

(2) Corollary \ref{main_Bessel} should be compared with the results of \cite[Section 4]{RV2} for Bessel functions on symmetric cones, which are closely related to the Bessel functions $J_k^B$. In \cite[Theorem 4]{RV2} 
also a sufficient condition for the existence of Sonine formulas between Bessel functions on a symmetric cone is given. 
It is based on the knowledge of the parameters for which Riesz distributions are actually measures. By similar methods, based on the recent results \cite{R3} about Riesz distributions in the Dunkl setting,
it should be possible to obtain extended parameter ranges for positive Sonine formulas of type  \eqref{Sonine_1}, but this would be not strong enough to imply positivity of the associated intertwining operator. 

 \end{remarks}

 \section{Consequences for hypergeometric functions and Heckman-Opdam polynomials of type BC}\label{trig}

 We start with some basic facts from Heckman-Opdam theory, see \cite{HS, O2} for more details.
 Let again $(\frak a, \langle\,.\,,\,.\,\rangle) $ be a finite dimensional Euclidean space, which we identify with its dual $\frak a^*= \text{Hom}(\frak a, \mathbb R)$ via the given inner product. 
 Let $R$ be a crystallographic, not necessarily reduced root system in $\frak a$ with associated reflection group $W$  and 
 fix a positive subsystem $R_+$ of $R$ as well as a $W$-invariant multiplicity function $k$ on $R$, where we assume  for simplicity that $k$ is real-valued with $\, k\geq 0.$ 
 The Cherednik operators associated with $R_+$ and $k$ are defined by
 $$ D_\xi(k) = \partial_\xi + \sum_{\alpha \in R_+} k(\alpha) \langle \alpha, \xi\rangle \frac{1}{1-e^{-\alpha}} (1-\sigma_\alpha) -\langle \rho(k), \xi\rangle, \quad \xi \in \mathbb R^n$$
 where $e^{\lambda}(z) := e^{\langle \lambda,z\rangle}$ for $\lambda, z \in \frak a\, $ and  
 $\, \rho(k) = \frac{1}{2} \sum_{\alpha \in R_+} k(\alpha) \alpha.$
 
  The $D_\xi(k), \xi \in \frak a$ commute, and for each $\lambda \in \frak a _\mathbb C$ there exists a unique analytic function $G(\lambda,k; .\,)$ on a common $W$-invariant tubular neighborhood of $\frak a$ in $\frak a_{\mathbb C}$, the Opdam-Cherednik kernel,  satisfying
  $$ D_\xi(k)G(\lambda,k;\,.\,) = \langle \lambda, \xi \rangle\, G(\lambda,k; \,.\,) \>\> \forall \, \xi \in \frak a ; \,\,G(\lambda, k; 0) = 1.$$
The hypergeometric function associated with $R$ is  defined by
$$ F(\lambda, k;z) = \frac{1}{|W|} \sum_{w\in W} G(\lambda, k; w^{-1}z).$$
It is $W$-invariant in both $z$ and $\lambda$.
Closely related with the hypegeometric function are the Heckman-Opdam polynomials. To introduce these, write
$\alpha^\vee = \frac{2\alpha}{\langle \alpha, \alpha \rangle}$ for $\alpha \in R$ and consider the weight lattice and the set of dominant weights associated with $R$ and $R_+$, 
$$ P = \{\lambda \in \frak a: \langle \lambda, \alpha ^\vee \rangle \in \mathbb Z \>\> \forall \alpha \in R\,\}; \quad
P_+ = \{\lambda \in P: \langle \lambda, \alpha^\vee\rangle \geq 0\,\, \forall \alpha \in R_+\,\}.$$ 
Note that $R_+\subset P_+$. We equip $P_+$ with the usual dominance order, that is, $\mu < \lambda$ iff $\lambda-\mu$ is a sum of positive roots. 
Denote further by
$\, \mathcal T:= \text{span}_{\mathbb C}\{e^{i\lambda}, \, \lambda \in P\}\,$
the space of trigonometric polynomials associated with $R$. Notice that the members of $\mathcal T$  are $2\pi Q^\vee$-periodic, where
$ Q^\vee = \text{span}_{\mathbb Z}\{\alpha^\vee, \, \alpha \in R\},$   and that the orbit sums 
$$ M_\lambda = \sum_{\mu \in W\!\lambda} e^{i\mu}\,, \quad \lambda \in P_+$$
form a basis of the subspace $\mathcal T^W$ of $W$-invariant elements from $\mathcal T.$ Consider the compact torus $\mathbb T = \frak a/2\pi Q^\vee$ with the weight function 
$$ \delta_k(t) := \prod_{\alpha \in R_+} |\sin\frac{\langle \alpha, t\rangle}{2}|^{2k_\alpha}.$$
The Heckman-Opdam polynomials associated with $R_+$ and $k$ are defined by
$$ P_\lambda(k;z) := M_\lambda(z) + \sum_{\nu < \lambda} c_{\lambda\nu}(k) M_\nu(z); \quad \lambda \in P_+\,, z\in \frak a_{\mathbb C}$$
where the coefficients $c_{\lambda\nu}(k)\in \mathbb R$ are uniquely determined by the condition that $P_\lambda(k;\,.\,)$ is orthogonal to $M_\nu$ in $L^2(\mathbb T, \delta_k)$ for all $\nu \in P_+$ with $\nu<\lambda.$ It is known that the coefficients actually satisfy $c_{\lambda\nu}(k) \geq 0$ for all indices ${\lambda,\nu}$ (\cite[Par.11]{M}), and that 
the family $\{P_\lambda(k; \,.\,), \lambda\in P_+\,\}$ forms an orthonormal basis of $L^2(\mathbb T, \delta_k)^W$, the subspace of $W$-invariant functions from $L^2(\mathbb T, \delta_k)$. 
The renormalized polynomials
$$ R_\lambda(k;z):= \frac{P_\lambda(k;z)}{P_\lambda(k;0)}$$
are related with the hypergeometric function via (see \cite{HS}) 
$$ R_\lambda( k;z) = F(\lambda+\rho(k),k;iz).$$
As $R_0(k,\,.\,) =1$, it follows that 
\begin{equation}\label{normalization} F(\rho(k),k;\,.\,) =1.\end{equation}

We now consider $\mathfrak a = \mathbb R^n$ with the nonreduced root system
 $$ R= BC_n = \{\pm e_i, \pm 2 e_i, 1\leq i \leq n\}\cup\{ \pm(e_i \pm e_j), 1\leq i < j \leq n\} \subset \mathbb R^n. $$
 Its weight lattice is $P = \mathbb Z^n$ and the torus $\mathbb T$ is given by $\mathbb T= (\mathbb R/2\pi \mathbb Z)^n.$
 We write mulitplicities on $R$ as $k=(k_1,k_2, k_3)$ with $k_1, k_2, k_3$ the values on the roots $e_i, 2e_i, e_i \pm e_j$. If $n=1$, then 
$k_3$ does not appear and $k=(k_1,k_2)$.
 We fix some positive subsystem $R_+$ and denote 
 the associated Opdam-Cherednik kernel and hypergeometric function
 by $G_{BC}$ and $F_{BC}.$

 Dunkl operators are scaling limits of Cherednik operators, which implies that Dunkl 
kernels and Bessel functions can be obtained by a contraction limit from Opdam-Cherednik kernels and hypergeometric functions. We shall need the following variant of Theorem 4.12 in \cite{dJ2} (see also \cite{RV1}) which was originally formulated for reduced root systems. The proof extends to $R=BC_n$  in the obvious way. 

\begin{lemma}\label{scaling} Consider the root systems $BC_n$ with multiplicity $k=(k_1,k_2,k_3)$ and $B_n$ with
multiplicity $k_0:= (k_1+k_2, k_3).$ Let further $K, L \subset \mathbb C^n$ be compact, $\delta>0$ some constant and let $h:(0, \delta) \times L \to \mathbb C^n$  a continuous function such that $\, \lim_{\epsilon \to 0}  \epsilon h(\epsilon, \lambda) = \lambda \,$ uniformly on $L$. Then
\begin{equation}\label{contraction} \lim_{\epsilon \to 0} G_{BC}(h(\epsilon, \lambda),k; \epsilon z) = 
E_{k_0}^B(\lambda,z), \quad \lim_{\epsilon \to 0} F_{BC}(h(\epsilon, \lambda),k; \epsilon z) = 
J_{k_0}^B(\lambda,z) \end{equation}
uniformly for $(\lambda,z)\in L\times K.$ 

\end{lemma}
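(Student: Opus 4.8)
The plan is to prove the two contraction limits \eqref{contraction} by reducing them to the reduced-root-system version, Theorem 4.12 of \cite{dJ2}, and checking that the scaling argument goes through verbatim for the non-reduced system $BC_n$. The essential observation is that the contraction is a statement about the \emph{differential-reflection operators} and the associated kernels, and the only feature of $BC_n$ that distinguishes it from a reduced system is the presence of the collinear roots $e_i$ and $2e_i$. First I would record precisely how the Cherednik operators $D_\xi(k)$ for $BC_n$ degenerate to the Dunkl operators $T_\xi(k_0)$ for $B_n$ under the substitution $z\mapsto \epsilon z$ and simultaneous rescaling of the spectral parameter. Writing the rescaled operator $\epsilon\, D_\xi(k)(\epsilon\,\cdot\,)$ and expanding the factor $\tfrac{1}{1-e^{-\alpha}}$ for $\alpha\in R_+$ near $0$, one has $\tfrac{\langle\alpha,\xi\rangle}{1-e^{-\langle\alpha,\epsilon\,\cdot\rangle}}\to \tfrac{\langle\alpha,\xi\rangle}{\langle\alpha,\,\cdot\,\rangle}$ as $\epsilon\to 0$. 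The key bookkeeping point is that the two roots $e_i$ and $2e_i$ have reflections $\sigma_{e_i}=\sigma_{2e_i}$ acting identically, so their contributions merge: the limiting multiplicity on $e_i$ in the $B_n$ Dunkl operator is exactly $k_1+k_2$, which is the choice $k_0=(k_1+k_2,k_3)$ made in the statement. This is the heart of why the contracted system is $B_n$ with this shifted multiplicity rather than $BC_n$.

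Next I would set up the functional-analytic convergence. The cleanest route is to observe that both $G_{BC}(h(\epsilon,\lambda),k;\epsilon z)$ and $E_{k_0}^B(\lambda,z)$ are characterized as the unique normalized joint eigenfunctions of their respective operator families, and that the rescaled kernel $g_\epsilon(z):=G_{BC}(h(\epsilon,\lambda),k;\epsilon z)$ satisfies the rescaled eigenvalue equations $\epsilon\,D_\xi(k)\big|_{\epsilon\,\cdot}\, g_\epsilon = \langle \epsilon\, h(\epsilon,\lambda),\xi\rangle\, g_\epsilon$ with $g_\epsilon(0)=1$. Since $\epsilon\, h(\epsilon,\lambda)\to\lambda$ uniformly on $L$ by hypothesis and the rescaled operators converge coefficientwise to $T_\xi(k_0)$ on any fixed compact set, the proof of Theorem 4.12 in \cite{dJ2} applies without change: the uniform analytic estimates on the Opdam kernel (in particular the holomorphic bounds on the tubular neighborhood together with Cauchy-type estimates on derivatives) give equicontinuity and local uniform boundedness of the family $\{g_\epsilon\}$ on $L\times K$, so that by a normal-families/Montel argument every subsequence has a locally uniformly convergent sub-subsequence whose limit solves the $B_n$ Dunkl eigenvalue problem with parameter $\lambda$, hence equals $E_{k_0}^B(\lambda,\cdot)$ by the uniqueness in $\mathcal K^{reg}$. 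Uniqueness of the limit then upgrades convergence to the full family, uniformly on $L\times K$.

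The Bessel-function statement follows immediately by averaging: since $J_k^B = \tfrac{1}{|W|}\sum_{w\in W}E_k^B(w\,\cdot\,,\,\cdot\,)$ and $F_{BC}=\tfrac{1}{|W|}\sum_{w\in W}G_{BC}(\lambda,k;w^{-1}\,\cdot\,)$ are the respective $W$-means, and $W=W(BC_n)=W(B_n)$ is the common hyperoctahedral group, the first limit in \eqref{contraction} applied term by term over the finite sum yields the second, with uniformity preserved. I would then note that the $W$-invariance of $F_{BC}$ in the spectral variable lets one replace $h(\epsilon,\lambda)$ by any $W$-translate without affecting the averaged statement.

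I expect the main obstacle to be purely technical rather than conceptual: verifying that the uniform estimates underlying the proof of Theorem 4.12 in \cite{dJ2}, which are stated for reduced root systems, genuinely extend to $BC_n$. The potential difficulty is that the Opdam kernel bounds and the construction of the tubular domain of holomorphy rely on root-system geometry, and one must confirm that the presence of the multiplicatively dependent roots $e_i, 2e_i$ does not spoil these bounds. In practice this is harmless because the shift-operator and Harish-Chandra-series estimates of Opdam are formulated for arbitrary (possibly non-reduced) crystallographic $R$, so the only thing to check is that the degeneration of the rational part of $D_\xi(k)$ is uniform in $\epsilon$ near the origin; the $e^{-\alpha}$-expansion is smooth and the collinear roots contribute comparable terms, so no new singularities arise. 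Hence I would assert that ``the proof extends to $R=BC_n$ in the obvious way,'' exactly as the statement claims, the only substantive modification being the coalescence of the $e_i$ and $2e_i$ contributions into the single $B_n$-multiplicity $k_1+k_2$.
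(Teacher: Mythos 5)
Your proposal is correct and takes essentially the same approach as the paper, whose entire proof consists of citing Theorem 4.12 of \cite{dJ2} (stated there for reduced root systems) and asserting that the proof extends to $R=BC_n$ in the obvious way. What you supply is precisely the content of that assertion: the rescaled Cherednik operators degenerate to the $B_n$ Dunkl operators because the rational limit term $\langle\alpha,\xi\rangle/\langle\alpha,\cdot\,\rangle$ is scale-invariant in $\alpha$, so the collinear roots $e_i$ and $2e_i$ coalesce into a single $B_n$-root with multiplicity $k_1+k_2$, after which de Jeu's boundedness--normal-families--uniqueness argument and the $W(B_n)=W(BC_n)$ averaging go through unchanged.
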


Hypergeometric functions associated with root systems generalize the spherical functions of Riemannian symmetric spaces $G/K$  of noncompact type. More precisely, 
suppose that $\Sigma$ is the restricted root system of  $G/K$ with Weyl group $W$ and geometric multiplicities $m_\alpha, \, \alpha \in \Sigma$. 
Let $F$ be the hypergeometric function associated with $R= 2\Sigma$ and define the multiplicity $k$ on $R$ by $k(2\alpha) := \frac{1}{2}m(\alpha).$  Consider the decomposition $G=KAK$ and let $\frak a := Lie(A)$, which is a Euclidean space with the Killing form $\langle \,.\,, \, . \,  \rangle$.   Then the spherical functions of $G/K$, considered as $W$-invariant functions on $\frak a$, are given by
$ \,\varphi_\lambda(x) = F(\lambda,k;x), \,\lambda \in \frak a_{\mathbb C}$. From the Harish-Chandra formula \cite[Theorem IV.4.3]{Hel} and the Kostant convexity theorem it follows that for $R$ and $k$ as above, 
$$ F(\lambda + \rho(k),k;x) = \int_{C(x)} e^{\langle \lambda,\xi\rangle} dm_x^k(\xi)\quad \forall \lambda \in \frak a_{\mathbb C}$$
where $C(x)\subset \frak a$ again denotes the convex hull of the $W$-orbit of $x$ and $m_x^k$ is a certain $W$-invariant probability measure.
For root system $A_n$ and certain $BC_n$-cases, this integral representation was recently extended in \cite{Sa1, Sa2} to arbitrary 
non-negative multiplicities, including a detailed analysis of the representing measures $m_x^k$. 
See also \cite{Su} for an alternative approach in the $A_n$-case.
 A natural generalization would be an integral representation of Sonine type between hypergeometric functions with different multiplicities, where we allow a constant shift (depending on the multiplicity) in 
 the spectral variable. 
 
 In the following, we consider $R=BC_n$ with $n \geq 2.$ Recall that $F_{BC}$ is $W(B_n)$-invariant both in the spatial and the spectral variable. 
 For $a \in \mathbb R^n$ we shall write 
 $ a \geq 0$ if $a$ is contained in the positive Weyl chamber $[0, \infty[^n$, and for $c \in \mathbb R$ we put ${\bf c}:= (c, \ldots, c) \in \mathbb R^n$.
As a consequence of Corollary \ref{main_Bessel}, we obtain the following result:

\begin{theorem}\label{hypergeom_integral} Fix $k=(k_1,k_2,k_3)\in \mathbb R^3$ with $k_1, k_2 \geq 0, \, k_3 >0$ and consider $k^\prime =k^\prime(h):= (k_1+h_1, k_2+h_2, k_3)$ with 
$ h_1>-k_1, h_2>-k_2.$ Suppose that there are constants  $\sigma(k), \sigma(k^\prime) \in \mathbb R^n$ and $c_0>0$ such that for all 
$c \in \mathbb R$ with $0<c< c_0$ there holds a Sonine formula
\begin{equation}\label{Sonine_hypergeom} F_{BC}(\lambda+ \sigma(k^\prime), k^\prime; {\bf c}) = \int_{\mathbb R^n} 
F_{BC}(\lambda+ \sigma(k),k;\xi)\, dm_c(\xi)\quad \forall \lambda \in \mathbb C^n,\end{equation}
with a positive Radon measure $m_c$ on $\mathbb R^n$ which is supported in $[0,c]^n.$ Then $h_1+h_2$ is contained in the set
$$ \Sigma(k_3)=\, ]k_3(n-1), \infty [ \, \cup \, \bigl(\{0, k_3, \ldots, k_3(n-1)\} -\mathbb Z_+\bigr).$$
In particular,  there are multiplicities $k^\prime \geq k \geq 0$ on $BC_n$ such that a Sonine formula \eqref{Sonine_hypergeom} does not exist.
\end{theorem}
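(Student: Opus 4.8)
The plan is to deduce Theorem \ref{hypergeom_integral} from the rational result Corollary \ref{main_Bessel} by applying the contraction limit of Lemma \ref{scaling}. The key observation is that under contraction, the $BC_n$ multiplicity $k=(k_1,k_2,k_3)$ collapses to the $B_n$ multiplicity $k_0=(k_1+k_2,k_3)$, and similarly $k'=(k_1+h_1,k_2+h_2,k_3)$ collapses to $k_0'=(k_1+h_1+k_2+h_2,k_3)=(k_1+k_2+(h_1+h_2),k_3)$. Thus the combined shift $h_1+h_2$ on the $BC_n$ side plays exactly the role of the single shift $h$ on the $B_n$ side (with $B_n$-multiplicity $k_2^{B_n}=k_3$ in the notation of Corollary \ref{main_Bessel}), which is why the target set $\Sigma(k_3)$ appears. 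So first I would set up the contraction: put $z=\epsilon$-scaled variables and choose the spectral argument of $F_{BC}$ appropriately so that Lemma \ref{scaling} applies to both sides of \eqref{Sonine_hypergeom}.

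First I would fix $\xi\in\mathbb R^n$ and apply the Sonine formula \eqref{Sonine_hypergeom} at a spectral point $\lambda=h(\epsilon,\mu)-\sigma(k')$ (or the analogous shift on each side), scaling the spatial reference point ${\bf c}=\epsilon{\bf 1}$ by taking $c=\epsilon$. Writing $m_\epsilon$ for the associated measure supported in $[0,\epsilon]^n$, I would rescale the integration variable $\xi=\epsilon x$ to transport $m_\epsilon$ to a measure $\widetilde m_\epsilon$ on $[0,1]^n$. On the left side, Lemma \ref{scaling} gives $F_{BC}(h(\epsilon,\mu),k';\epsilon{\bf 1})\to J_{k_0'}^B(\mu,{\bf 1})$ where $k_0'=(k_1+k_2+h_1+h_2,k_3)$; on the right side, provided the shift constants $\sigma(k),\sigma(k')$ are absorbed into the continuous family $h(\epsilon,\cdot)$ in a way compatible with the hypothesis $\epsilon h(\epsilon,\mu)\to\mu$, the integrand $F_{BC}(h(\epsilon,\mu),k;\epsilon x)\to J_{k_0}^B(\mu,x)$ uniformly on compacts. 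The main obstacle, and the step requiring genuine care, will be justifying the passage to the limit under the integral sign: I need the measures $\widetilde m_\epsilon$ to have uniformly bounded total mass and to be supported in the fixed compact $[0,1]^n$, so that by weak-$*$ compactness (Prokhorov) a subnet converges to some positive Radon measure $m$ on $[0,1]^n\subset\mathbb R^n$, and the uniform convergence of the integrands on this fixed compact lets me interchange limit and integral.

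To control the total mass uniformly, I would evaluate \eqref{Sonine_hypergeom} at the normalizing spectral value (analogous to $\lambda=0$ after the $\rho$-shift), using the normalization \eqref{normalization} that $F_{BC}(\rho(k),k;\,\cdot\,)=1$, so that the constant function $1$ is integrated; this pins $m_\epsilon(\mathbb R^n)$ to the value of the left-hand side, which converges to $J_{k_0'}^B$ evaluated at the corresponding point, hence stays bounded. Passing to the limit then yields a positive bounded Radon measure $m$ supported in $[0,1]^n$ satisfying
\begin{equation}\label{Sonine_contracted}
J_{k_0'}^B(\mu,{\bf 1}) = \int_{\mathbb R^n} J_{k_0}^B(\mu,x)\,dm(x)\quad\forall\mu\in\mathbb C^n.
\end{equation}
Restricting to $\mu=i\xi$, $\xi\in\mathbb R^n$, this is precisely a positive Sonine formula of the form appearing in Corollary \ref{main_Bessel} for the $B_n$-multiplicity $k_0=(k_1+k_2,k_3)$ with total shift $h=h_1+h_2>-(k_1+k_2)$. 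Since here $k_1+k_2\geq 0$ is real and $m$ is positive, Corollary \ref{main_Bessel}(2) forces $h_1+h_2\in\Sigma(k_3)$, which is the claimed conclusion. For the final sentence of the statement, I would simply choose $h_1,h_2>0$ with $h_1+h_2$ outside $\Sigma(k_3)$ (which is possible because $\Sigma(k_3)$ omits, for instance, all values strictly between $k_3(n-1)$ and its nearest element of the discrete part below, and such a gap exists for $n\geq 2$), giving multiplicities $k'\geq k\geq 0$ for which no Sonine formula \eqref{Sonine_hypergeom} can hold.
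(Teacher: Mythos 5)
Your proposal is correct and takes essentially the same route as the paper's proof: evaluate \eqref{Sonine_hypergeom} at the normalizing spectral value to control the total masses via \eqref{normalization}, rescale the measures to $[0,1]^n$, extract a weak limit by Prohorov's theorem, contract both sides with Lemma \ref{scaling} to obtain a positive Sonine formula $J_{k_0'}^B(\lambda,\mathbf{1})=\int J_{k_0}^B(\lambda,\xi)\,dm(\xi)$ with $k_0=(k_1+k_2,k_3)$, $k_0'=(k_1+k_2+h_1+h_2,k_3)$, and conclude from Corollary \ref{main_Bessel}. Two minor points: the part of Corollary \ref{main_Bessel} that applies is (1) (positive Sonine measure, real first multiplicity), not (2) (positivity of the intertwiner); and for the mass bound the paper invokes the estimate of \cite[Theorem 3.3]{RKV} to cover all $0<c<c_0$, whereas your argument via continuity of the left-hand side at $c=0$ also suffices, since Prohorov's theorem is only needed along a sequence $c_j\to 0$.
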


Before turning to the proof of this result, let us mention a canonical analogue of integral representation \eqref{Sonine_hypergeom} in the rank one case $R= BC_{1}.$ 
In this case, the Heckman-Opdam hypergeometric function is given by
$$ F_{BC_1} (\lambda, k;t) = \varphi_{-2i\lambda}^{(\alpha, \beta)}\bigl(t/2) \quad \text{ with }\, \alpha = k_1+k_2 -1/2, \beta = k_2 - 1/2$$
and with the Jacobi functions 
$$ \varphi_\lambda^{(\alpha, \beta)}(t) = \, _2F_1\bigl(\frac{1}{2}(\alpha + \beta + 1 + i\lambda), \frac{1}{2}(\alpha + \beta +1 -i\lambda); \alpha +  1, - \sinh^2 t\bigr), $$
see \cite[Ex.1.3.2]{O2} and \cite[formula (2.4)]{Ko}. Suppose that $\alpha > \beta > -1/2$ and $\alpha > \gamma > \delta > -1/2.$ Then according to identity (5.70) of \cite{Ko}, there holds for each $x\in [0, \infty[$ a  Sonine formula of the form
$$ \varphi_\lambda^{(\alpha, \beta)} (x) = \int_{0}^x \varphi_\lambda^{(\gamma, \delta)} (\xi) d\mu_x(\xi) \quad \forall \lambda \in \mathbb C, $$
with positive measures $\mu_x$ depending on $\alpha, \beta, \gamma, \delta$. 
In terms of $F_{BC_1}$ this means that for multiplicities $k= (k_1, k_2) $ with $k_1, k_2 > 0$ and $k^\prime = (k_1 +h_1, k_2+h_2)$ with $h_1 > -k_1, h_2 > -k_2$ as well as $h_1+h_2 >0,$ 
$$ F_{BC_1}(\lambda, k^\prime;c) = \int_0^c F_{BC_1}(\lambda, k; t)  dm_c(t),\quad \forall \lambda \in \mathbb C$$
with positive measures $ m_c$ depending on $k, k^\prime$ for all $c \in ]0, \infty[.$

\begin{proof}[Proof of Theorem \ref{hypergeom_integral}]
The main idea is to apply Lemma \ref{scaling}.
First, we  check that 
there is some constant $C_0>0$ such that 
\begin{equation}\label{M_bound}  M_c:= m_c(\mathbb R^n) \leq C_0 \quad\text{for all }\, 0< c < c_0.\end{equation}
 For this let $\rho:= \rho(k), \rho^\prime:= \rho(k^\prime),
\sigma:= \sigma(k), \sigma^\prime := \sigma(k^\prime),\, \tau:= \sigma^\prime - \sigma.$  
Consider formula \eqref{Sonine_hypergeom}
 with $\lambda= -\rho-\sigma$. By the normalization \eqref{normalization} and the $W$-invariance of $F$ in the spectral variable, we get
\begin{align*}   F(\rho-\tau, k^\prime;{\bf c}) &=
F(-\rho + \tau, k^\prime; {\boldsymbol c}) =  F(\lambda+\sigma^\prime , k^\prime; {\boldsymbol c})
=\int_{\mathbb R^n} F(-\rho,k;\xi) dm_c(\xi) \notag  \\
& = m_c(\mathbb R^n) = M_c\,.\end{align*}
On the other hand, as $\rho^\prime \geq 0$ we obtain from \cite[Theorem 3.3]{RKV} the estimate
$$ F(\rho-\tau, k^\prime, {\boldsymbol c}) = 
  F(\rho^\prime + (\rho-\rho^\prime -\tau), k^\prime, {\boldsymbol c}) \, \leq 
  F(\rho^\prime,k^\prime;{\boldsymbol c}) \cdot e^{\max_{w\in W} \langle w(\rho-\rho^\prime - \tau), {\boldsymbol c}\rangle}.$$
  Therefore
  $$ M_c = F(\rho-\tau, k^\prime, {\boldsymbol c}) \leq 
e^{c \cdot\max_{w\in W} \langle w(\rho-\rho^\prime - \tau), {\bf 1}\rangle },$$
which yields the claimed boundedness \eqref{M_bound} of the masses $M_c$.

For  $0<c <c_0$ denote by $\widetilde m_c$ the image measure of $m_c$ under the
mapping $ x \mapsto x/c.$ By our assumption, the measures $\widetilde m_c$ are compactly supported in the cube $[0,1]^n$ with $\widetilde m_c(\mathbb R^n) =M_c$. By \eqref{Sonine_hypergeom},
\begin{equation}\label{hypergeom_2}  F_{BC}\bigl(\frac{\lambda}{c} + \rho(k^\prime), k^\prime; {\boldsymbol c}\bigr) = 
\int_{\mathbb R^n} F_{BC}\bigl( \frac{\lambda}{c}+ \rho(k), k; c \xi\bigr)\, d\widetilde m_c(\xi)\end{equation}
for all $\lambda\in \mathbb C^n.$ 
We may now apply Prohorov's theorem (see e.g. \cite{Bi}) and conclude 
 that the set $\{\widetilde m_c\,, 0<c < c_0\},$  is relatively sequentially compact in the weak topology.  
Hence
there exist a sequence $c_j \to 0$ and a positive Radon measure   $m$ supported on $[0,1]^n$ such 
that $\widetilde m_{c_j}\to m$ weakly for $j \to \infty$. 
Put $k_0:=(k_1+k_2, k_3)$ and $k^\prime_0:= (k_1+k_2 +h_1+h_2, k_3)$ on root system $B_n$. Using Lemma \ref{scaling} and taking the limit $c_j \to 0$ 
in formula \eqref{hypergeom_2}, we obtain that 
$$ J_{k_0^\prime}^B(\lambda, {\bf 1}) = \int_{\mathbb R^n} J_{k_0}^B(\lambda, \xi) dm(\xi)$$ 
for all $\lambda \in \mathbb C^n$. Corollary \ref{main_Bessel} now implies the assertion.
\end{proof}

 We now turn to the Heckman-Opdam polynomials of type $BC_n$. These
  have been extensively studied in the literature, see for instance \cite{BO, La, RR}.  We consider the ($W(B_n)$-invariant) normalized polynomials 
  $R_\lambda= R_\lambda^{BC_n}, \, \lambda\in \mathbb Z_+^n$.
  A short calculation shows that the rescaled polynomials
$ \widetilde R_\lambda$ on  $[0,1]^n$ defined by $$ \widetilde R_\lambda\bigl(\frac{1}{2}(1-\cos t)\bigr):= R_\lambda(k;t),\quad 
\lambda \in \mathbb Z_+^n$$
 form an  orthogonal basis of
$L^2([0,1]^n, \rho_k)$ with the weight function 
$$ \rho_k(x) = \prod_{i=1}^n x_i^{k_1+k_2-1/2}(1-x_i)^{k_2-1/2} \prod_{i<j}|x_i-x_j|^{2k_3}.$$
 
In the rank one case, the Heckman-Opdam polynomials can be written in terms of the 
 classical one-variable Jacobi polynomials
 $$ R_n^{(\alpha, \beta)}(x) = \, _2F_1\bigl(-n, n+\alpha + \beta+1; \alpha +1;\frac{1}{2}(1-x)\bigr) \quad 
 (\alpha, \beta >-1, \, n\in \mathbb Z_+)$$
as follows, see \cite[Ex.1.3.2]{HS}:
 $$ R_n^{BC_1}(k;t) = R_n^{(\alpha, \beta)}(\cos t) \,\,  \text{ with } \,\alpha = k_1+k_2 -\frac{1}{2}, \, \beta = k_2-\frac{1}{2}.$$
Classical Jacobi polynomials have various interesting integral representations. Among them are the following ones (for $x\in [-1,1]$ and with suitable probability measures $\mu_x$ depending on the parameters):
for $\gamma > \alpha > -1,$ 
\begin{equation}\label{Ultra}  
R_n^{(\gamma, \gamma)}(x) = \int_{-1}^1 R_n^{(\alpha,\alpha)}(y)d\mu_x(y) \quad \forall n\in \mathbb Z_+
\end{equation}
and for $\gamma > \alpha > -1$ and $\beta >-1$, 
\begin{equation}\label{As} R_n^{(\gamma, \beta)}(x) = \int_{-1}^1 R_n^{(\alpha,\beta)}(y)d\mu_x(y) \quad \forall n\in \mathbb Z_+\,.
\end{equation} 
see \cite{A2}, Theorem 3.4 and \cite[eq. (4.19)]{A1}.

In the higher rank case, one may ask for integral representations between Heckman-Opdam polynomials with different multiplicities.  
Note that for the normalized Heckman-Opdam polynomials $R_\lambda$  of type $BC_n$, Lemma \eqref{scaling}  implies that
\begin{equation}\label{polylimes} \lim_{m\to \infty} R_{m\lambda}\bigl(k;\frac{t}{m}\bigr) = J_{k_0}^B(\lambda, it).\end{equation}

 The following necessary condition concerns  analogues of formulas \eqref{Ultra} and   \eqref{As} in higher rank; it is a counterpart of 
 Theorem \ref{hypergeom_integral} with essentially the same proof.

\begin{proposition} Consider root system $BC_n$ with multiplicities $k=(k_1,k_2,k_3)$ and $ k^\prime = (k_1+h_1,k_2+h_2, k_3) $  
as in Proposition \ref{hypergeom_integral}.  Suppose that for each $\tau \in \mathbb R/2\pi \mathbb Z\,$ 
there exists a positive Radon measure $m_\tau$ on $\mathbb T= (\mathbb R/2\pi \mathbb Z)^n$ such that 
$$ R_\lambda(k^\prime; {\bf \tau}) = \int_{\mathbb T} R_\lambda(k;s) dm_\tau(s) \quad \forall \lambda \in \mathbb Z_+^n.$$ 
Then $h_1+h_2$ is contained in $\Sigma(k_3).$

\end{proposition}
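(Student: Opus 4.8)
The strategy is to follow the proof of Theorem~\ref{hypergeom_integral} almost verbatim, using the polynomial contraction limit \eqref{polylimes} in place of the hypergeometric one and reducing matters to Corollary~\ref{main_Bessel}. Put $k_0=(k_1+k_2,k_3)$ and $k_0'=(k_1+k_2+h_1+h_2,k_3)$ for the associated $B_n$-multiplicities, as in Lemma~\ref{scaling}. Testing the hypothesis with $\lambda=0$ and using $R_0\equiv 1$ shows that each $m_\tau$ is a probability measure on $\mathbb T$. Next I specialize the hypothesis to $\tau=1/m$ and replace $\lambda$ by $m\lambda$ (with $m\lambda\in\mathbb Z_+^n$), so that the reference point becomes ${\bf\tau}={\bf 1}/m$ and, by \eqref{polylimes}, the left-hand side $R_{m\lambda}(k';{\bf 1}/m)$ tends to $J_{k_0'}^B(\lambda,i{\bf 1})$. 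On the right-hand side I substitute $s=\xi/m$, i.e.\ I pass to the image measures $\widetilde m_m$ of $m_{1/m}$ under $s\mapsto ms$ (after lifting $m_{1/m}$ to the fundamental domain $[-\pi,\pi]^n$); by Lemma~\ref{scaling}, $R_{m\lambda}(k;\xi/m)\to J_{k_0}^B(\lambda,i\xi)$ uniformly on compact sets of $\xi$.

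The decisive step, which here replaces the explicit support assumption of Theorem~\ref{hypergeom_integral}, is to establish tightness of $\{\widetilde m_m\}$ by a second moment estimate. Consider the nonnegative $W(B_n)$-invariant trigonometric polynomial $g(s):=\sum_{i=1}^n(1-\cos s_i)=n-\tfrac12 M_{e_1}(s)$. Since $e_1$ is a minimal nonzero dominant weight, $P_{e_1}(k;\cdot)=M_{e_1}+c_{e_1,0}(k)$, so $g$ is a positive multiple of $1-R_{e_1}(k;\cdot)$, say $g=C_k\,(1-R_{e_1}(k;\cdot))$ with $C_k=\tfrac12 P_{e_1}(k;0)>0$, and likewise $g=C_{k'}\,(1-R_{e_1}(k';\cdot))$. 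Integrating the first identity against $m_\tau$ and using the hypothesis with $\lambda=e_1$ gives $\int_{\mathbb T}g\,dm_\tau=C_k\,(1-R_{e_1}(k';{\bf\tau}))$, while evaluating the second identity at ${\bf\tau}$ gives $1-R_{e_1}(k';{\bf\tau})=g({\bf\tau})/C_{k'}=n(1-\cos\tau)/C_{k'}$. Hence $\int_{\mathbb T}g\,dm_\tau=(C_k/C_{k'})\,n(1-\cos\tau)=O(\tau^2)$. Combined with the elementary bound $1-\cos s_i\ge \tfrac{2}{\pi^2}s_i^2$ on $[-\pi,\pi]$ this yields $\int_{\mathbb T}|s|^2\,dm_\tau(s)=O(\tau^2)$, so that the second moments $\int|\xi|^2\,d\widetilde m_m(\xi)=m^2\!\int|s|^2\,dm_{1/m}(s)$ stay bounded and $\{\widetilde m_m\}$ is tight.

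By Prohorov's theorem \cite{Bi} some subsequence of $\{\widetilde m_m\}$ converges weakly to a positive measure $m$, which is again a probability measure since tightness excludes loss of mass. In order to dispose of the restriction $m\lambda\in\mathbb Z_+^n$ I run this extraction along the sequence $m_j=j!$ and pass to a single weakly convergent subsequence $\widetilde m_{m_{j_l}}\to m$; for every fixed $\lambda\in\mathbb Q_{\ge0}^n$ one has $m_{j_l}\lambda\in\mathbb Z_+^n$ for large $l$. Passing to the limit in $R_{m\lambda}(k';{\bf 1}/m)=\int R_{m\lambda}(k;\xi/m)\,d\widetilde m_m(\xi)$, using the uniform convergence of the integrand on compacta, the uniform boundedness of the normalized Heckman--Opdam polynomials on the real torus for $k\ge 0$, and tightness to control the tails, then gives $J_{k_0'}^B(\lambda,i{\bf 1})=\int_{\mathbb R^n}J_{k_0}^B(\lambda,i\xi)\,dm(\xi)$ for all $\lambda\in\mathbb Q_{\ge0}^n$. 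Both sides are continuous and $W(B_n)$-invariant in $\lambda$ (the right-hand side by dominated convergence, since $|J_{k_0}^B(\lambda,i\xi)|\le 1$ for real arguments), so the identity persists for all $\lambda\in\mathbb R^n$. Finally the symmetry $J_k^B(\lambda,i\xi)=J_k^B(i\lambda,\xi)$, which follows from $E_k(\mu x,y)=E_k(x,\mu y)$ with $\mu=i$ together with $W$-equivariance, rewrites this as $J_{k_0'}^B(i\lambda,{\bf 1})=\int J_{k_0}^B(i\lambda,\xi)\,dm(\xi)$, $\lambda\in\mathbb R^n$. This is exactly the Sonine formula of Corollary~\ref{main_Bessel} for the $B_n$-multiplicities $k_0$ and $k_0'=(k_1+k_2+(h_1+h_2),k_3)$ with the positive bounded measure $m$; since $k_1+k_2\ge 0$ and $m$ is positive, Corollary~\ref{main_Bessel} forces $h_1+h_2\in\Sigma(k_3)$.

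I expect the tightness/second moment estimate to be the main obstacle: unlike in Theorem~\ref{hypergeom_integral}, the representing measures $m_\tau$ carry no a priori support information, so the rescaling $s\mapsto ms$ is meaningful in the limit only after the concentration rate $\int|s|^2\,dm_\tau=O(\tau^2)$ has been secured. A secondary point requiring care is that the polynomials deliver the limiting Bessel identity only on the lattice $\mathbb Z_+^n$ (and, via the factorial subsequence, on $\mathbb Q_{\ge0}^n$), so a density and continuity argument is needed before Corollary~\ref{main_Bessel} can be applied.
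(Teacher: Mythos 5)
Your proof is correct, and its skeleton --- contraction to the $B_n$ Bessel setting via Lemma~\ref{scaling} and \eqref{polylimes}, Prohorov compactness for the rescaled measures, and a final application of Corollary~\ref{main_Bessel} with $k_0=(k_1+k_2,k_3)$, $k_0'=(k_1+k_2+h_1+h_2,k_3)$ --- is exactly the route the paper intends, since the paper justifies this proposition only by the remark that it is a counterpart of Theorem~\ref{hypergeom_integral} ``with essentially the same proof.'' Where you go beyond the paper is precisely in the two places where the present hypotheses are genuinely weaker than those of Theorem~\ref{hypergeom_integral}, and both of your supplements are needed and correct. First, tightness: in Theorem~\ref{hypergeom_integral} the measures $m_c$ are assumed supported in $[0,c]^n$, so after rescaling they sit in $[0,1]^n$ and Prohorov applies trivially; here the $m_\tau$ live on all of $\mathbb T$ with no support assumption, and your identity $\sum_{i=1}^n(1-\cos s_i)=C_k\bigl(1-R_{e_1}(k;s)\bigr)=C_{k'}\bigl(1-R_{e_1}(k';s)\bigr)$ with $C_k=\tfrac12 P_{e_1}(k;0)>0$ is valid (the only dominant weight below $e_1$ is $0$, so $P_{e_1}=M_{e_1}+c_{e_1,0}$ and the constant terms cancel in $P_{e_1}(k;0)-P_{e_1}(k;s)$, while $c_{e_1,0}(k)\ge 0$ by the Macdonald positivity cited in the paper); combined with the hypothesis at $\lambda=0$ and $\lambda=e_1$ it gives $\int_{\mathbb T}|s|^2\,dm_\tau=O(\tau^2)$, hence uniformly bounded second moments of the rescaled measures and tightness. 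This is the key point that the paper's one-line proof glosses over. Second, the spectral parameters here run only over $\mathbb Z_+^n$ rather than $\mathbb C^n$; your factorial-subsequence and density argument legitimately bridges this, and the bounds you invoke ($|R_\mu(k;\cdot)|\le 1$ on the real torus, from $c_{\lambda\nu}(k)\ge0$ and $|M_\nu|\le M_\nu(0)$, and $|J^B_{k_0}(\lambda,i\xi)|\le 1$ for real arguments, from \eqref{Dunkl_kernel_int}) are all available within the paper. One simplification is possible at this second step: instead of factorials and rationals, fix an arbitrary $\lambda\in[0,\infty[^n$ and apply the hypothesis to the integer weights $\lfloor m\lambda\rfloor$; since $\lfloor m\lambda\rfloor/m\to\lambda$, Lemma~\ref{scaling} with $\epsilon=1/m$ and spectral parameter $\lfloor m\lambda\rfloor+\rho(k)$ gives $R_{\lfloor m\lambda\rfloor}(k;t/m)\to J^B_{k_0}(\lambda,it)$, so a single Prohorov subsequence yields the Sonine formula for all real $\lambda$ at once, after which $W(B_n)$-invariance and Corollary~\ref{main_Bessel} finish the argument as you describe.
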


We finally turn to branching rules for Heckman-Opdam polynomials of type $BC_n$. For multiplicities $k, k^\prime$ on $BC_n$ and 
$\lambda\in \mathbb Z_+^n$ we have an expansion
$$ R_\lambda(k^\prime,t) = \sum_{\nu \leq \lambda } c_{\lambda,\nu}(k^\prime,k) R_\nu(k;t)$$
with unique connection coefficients $c_{\lambda,\nu}(k^\prime,k)\in \mathbb R.$ In rank one, the following positivity result for 
the connection coefficients between Jacobi polynomial systems is well-known, see e.g. \cite[(7.33)]{A2}: For $\alpha, \beta > -1$ and $\nu >0,$ 
 $$ R_n^{(\alpha + \nu, \beta)} = \sum_{j=0}^n c_{n,j} R_j^{(\alpha, \beta)} \,\, \text{ with }\, c_{n,j}\geq 0.$$ 
Heckman-Opdam polynomials of type $BC_n$ generalize the spherical functions of compact Grassmannians. See \cite{RR} for a detailed treatment, 
where however the notation (scaling of root systems and multiplicities) is slightly different from ours.  
To become specific, consider for fixed $n\in \mathbb N$ and integers $m>n$ the compact Grassmann manifolds $U_m/K_m$ with $U_m=SU(m+n, \mathbb F), 
K_m=S(U(m,\mathbb F) \times U(n,\mathbb F))$ for  $\mathbb F\in \{\mathbb R, \mathbb C, \mathbb H\}.$ Via polar decomposition of $U_m$, the 
double coset space $U_m//K_m$ may be topologically identified with the fundamental alcove
$$ A_0 = \{t \in \mathbb R^n: \frac{\pi}{2} \geq t_1 \geq \ldots \geq t_n\geq 0\}$$
with $t \in A_0$ being identified with the matrix
$$ a_t = \begin{pmatrix} \cos \underline t & \,0  & -\sin\underline t\\
0 \, & \,I_{m-n} & \, 0 \\
\sin\underline t & \, 0  & \cos\underline t \end{pmatrix} \in U_m\,, \quad \underline t = \text{diag}(t_1, \ldots, t_n).$$ 
The spherical functions of $U_m/K_m$ are given by
$$ \varphi_\lambda^m(a_t) = R_\lambda(k_m;2t),\quad \lambda\in \mathbb Z_+^n$$
with
$$ k_m = \bigl(d(m-n)/2, (d-1)/2, d/2\bigr), \quad d= \text{dim}_{\mathbb R}\mathbb F. $$
Here the $R_\lambda$ are again the normalized Heckman-Opdam polynomials associated with root system $BC_n$. 
For integers $l>m$ we consider $U_m$ as a closed subgroup of $U_l$. Then  
$K_m = U_m\cap K_l$ and $U_m/K_m$ is a submanifold of $U_l/K_l$.  As a function on $U_l$, the spherical function $\varphi_\lambda^l$ is 
$K_l$-biinvariant and positive definite, and its restriction to $U_m$ is $K_m$-biinvariant and positive definite on $U_m$. This implies that
$$ \varphi_\lambda^l\vert_{U_m} = \sum_{\nu \in \mathbb Z_+^n} c_{\lambda, \nu} \varphi_\nu^m $$
with unique branching coefficients $c_{\lambda, \nu} = c_{\lambda, \nu}(l,m)\geq 0$, only finitely many of them being different from zero. 
For the Heckman-Opdam polyomials this implies that
$$ R_\lambda(k_l;t) = \sum_{\nu \leq \lambda} c_{\lambda,\nu} R_{\nu}(k_m;t),$$
so the connection coefficients between the two systems are non-negative.

The next result however shows that for general multiplicities $k = (k_1, k_2, k_3) \geq 0$ and   $k^\prime = (k_1+h, k_2, k_3)$ with $h>0$   
 there may also occur negative connection coefficients between the associated systems of Heckman-Opdam polynomials.

 \begin{theorem}\label{connection_HO} Consider root system $BC_n$ with multiplicities $k=(k_1,k_2,k_3)$ and 
 $ k^\prime = (k_1+h_1,k_2+h_2, k_3) $  as in Theorem \ref{hypergeom_integral}. Suppose that $h_1+h_2 \notin \Sigma(k_3).$ 
 Then the connection coefficients in the expansion
\begin{equation}\label{expansion} R_{\bf m}(k^\prime;t)  = \sum_{\nu \leq {\bf m}} c_{{\bf m}, \nu} R_{\nu}(k;t), 
\quad {\bf m} = (m, \ldots, m)\end{equation}
 satisfy
  $$ \sup_{m\in \mathbb N} \sum_{\nu \leq \,{\bf m}} | c_{{\bf m}, \nu}| = \infty.$$
 In particular, there exist infinitely many $m\in \mathbb N$ such that $c_{{\bf m}, \nu} < 0$ for some $\nu$. 
 \end{theorem}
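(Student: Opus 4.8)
The plan is to pass from the polynomial branching expansion \eqref{expansion} to the Bessel-function setting via the contraction limit \eqref{polylimes}, and then invoke Corollary \ref{main_Bessel} to derive a contradiction from the hypothesis $h_1+h_2\notin\Sigma(k_3)$. Concretely, I would argue by contraposition: assume that $C:=\sup_{m}\sum_{\nu\le{\bf m}}|c_{{\bf m},\nu}|<\infty$, and show that this forces a bounded complex-measure Sonine formula between the Bessel functions $J^B_{k_0'}$ and $J^B_{k_0}$ of type $B_n$ (with $k_0=(k_1+k_2,k_3)$ and $k_0'=(k_1+k_2+h_1+h_2,k_3)$), which by Corollary \ref{main_Bessel}(1) would place $h_1+h_2$ in $\Sigma(k_3)$, contradicting the assumption.

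The key computational step is to encode the finite sum \eqref{expansion} as an integral against a complex measure and then take the contraction limit $m\to\infty$. First I would rescale: apply \eqref{expansion} at the point $t/m$ with spectral index ${\bf m}$ and multiplicities $k'$, $k$. Since the $R_\nu$ are $W(B_n)$-invariant, each $R_\nu(k;\cdot/m)$ may be viewed as a bounded continuous function on a fixed compact set, and the family of signed measures
\begin{equation}\label{measures_m}
dM_m := \sum_{\nu\le{\bf m}} c_{{\bf m},\nu}\,\delta_{\nu/m}
\end{equation}
has total variation bounded by $C$ by our standing assumption. I would then rewrite the branching relation as
\begin{equation}\label{branch_int}
R_{\bf m}\bigl(k';\tfrac{t}{m}\bigr) = \int_{\mathbb R^n} R_{\lfloor m x\rfloor\text{-type}}\bigl(k;\tfrac{t}{m}\bigr)\,dM_m\ \text{(suitably interpreted)},
\end{equation}
the point being that the index $\nu$ rescaled as $\nu/m$ plays the role of the spatial variable in \eqref{polylimes}. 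Letting $m\to\infty$ through a subsequence and using that the total masses $|M_m|(\mathbb R^n)\le C$ are uniformly bounded, a weak-compactness argument (Prohorov/Banach-Alaoglu for bounded complex measures on the fixed compact support dictated by $\nu\le{\bf m}$, i.e. $\nu/m\in[0,1]^n$) yields a limiting bounded complex measure $m_\infty$ supported in $[0,1]^n$. On the left-hand side of \eqref{branch_int}, \eqref{polylimes} gives $J^B_{k_0'}(\lambda,it)$; on the right, combining \eqref{polylimes} with dominated convergence against the uniformly bounded kernels $R_\nu(k;\cdot/m)\to J^B_{k_0}$ produces $\int J^B_{k_0}(\lambda,i\xi)\,dm_\infty(\xi)$. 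This delivers exactly the Sonine formula \eqref{Sonine_1} with a bounded complex Radon measure.

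The main obstacle, and the step requiring the most care, is the uniform convergence inside the integral in \eqref{branch_int}: the kernels $R_\nu(k;t/m)$ converge to $J^B_{k_0}$ only along the diagonal scaling $\nu={\bf m}$, $t\mapsto t/m$, whereas here the summation index $\nu$ ranges over all partitions below ${\bf m}$, so one must control $R_\nu(k;t/m)$ uniformly as $\nu/m$ ranges over $[0,1]^n$. I expect to handle this by invoking Lemma \ref{scaling} in its stated uniform form, taking $L$ to be the closed cube $[0,1]^n$ (the closure of the rescaled indices) and $h(\epsilon,\lambda)$ an interpolation realizing $\epsilon h(\epsilon,\nu/\epsilon)\to\nu/m$-type scaling, which gives convergence of $F_{BC}$ and hence of $R_\nu$ uniformly over the relevant index range; the uniform boundedness needed for dominated convergence follows from the estimate $|R_\nu|\le 1$ built into the normalization together with \eqref{M_bound}-type mass control. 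Once the Sonine formula \eqref{Sonine_1} is established with $m_\infty$ a bounded complex measure, Corollary \ref{main_Bessel}(1) forces $h_1+h_2\in\Sigma(k_3)$, contradicting the hypothesis and completing the contrapositive. The final sentence of the theorem, asserting the existence of infinitely many $m$ with some $c_{{\bf m},\nu}<0$, is then immediate: if only finitely many $m$ had a negative coefficient, the infinitely many remaining expansions would have all $c_{{\bf m},\nu}\ge0$ with $\sum_\nu c_{{\bf m},\nu}=R_{\bf m}(k';0)/\,\cdot\,=1$ (by evaluation at $t=0$, using $R_\nu(k;0)=1$), so the absolute sums would be bounded along that infinite set, again contradicting $\sup_m\sum_\nu|c_{{\bf m},\nu}|=\infty$.
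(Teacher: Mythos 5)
Your proposal is correct and follows essentially the same route as the paper: assume bounded absolute coefficient sums, form the signed measures $\sum_{\nu\le{\bf m}}c_{{\bf m},\nu}\,\delta_{\nu/m}$ supported in $[0,1]^n$, extract a weak limit, pass to the contraction limit to obtain a complex-measure Sonine formula for $J^B_{k_0'}$ against $J^B_{k_0}$, and invoke Corollary~\ref{main_Bessel} to contradict $h_1+h_2\notin\Sigma(k_3)$. The one step you leave vague (the ``suitably interpreted'' kernel and the interpolating $h(\epsilon,\lambda)$) is made precise in the paper by the identity $R_\nu(k;t/m)=F_{BC}\bigl(\nu+\rho(k),k;it/m\bigr)=F_{BC}\bigl(mx+\rho(k),k;it/m\bigr)$ with $x=\nu/m$, so that Lemma~\ref{scaling} applies directly with $L=[0,1]^n$ and $h(\epsilon,x)=x/\epsilon+\rho(k)$, yielding the uniform convergence you require.
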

 
 \begin{proof} Assume in the contrary that $\,S:= \sup_{m\in \mathbb N} \sum_{\nu \leq \,{\bf m}} | c_{{\bf m}, \nu}| < \infty.$ 
 We proceed similar as in \cite{RV1} and introduce the bounded, discrete signed measures
 $$  \mu_m:= \sum_{\nu \leq {\bf m}}  c_{{\bf m}, \nu}\,\delta_{\nu/m} \in M_b(\mathbb R^n), \quad m \in \mathbb N,$$
 where $\delta_x$ denotes the point measure in $x\in \mathbb R^n.$ 
By definition of the dominance order, the support of $\mu_m$  is contained in the compact cube $[0,1]^n.$
 With these measures,  
 expansion \eqref{expansion} can be written as
  \[
 R_{\bf m}\bigl( k^\prime; \frac{t}{m}\bigr)  =  \sum_{\nu \leq {\bf m}}  c_{{\bf m}, \nu}\, F_{BC} \bigl(\nu + \rho(k),k;\frac{it}{m}\bigr) 
  = \int_{[0,1]^n} F_{BC}\bigl(mx + \rho(k),k;\frac{it}{m}\bigr)d\mu_m(x).
 \]

 We consider the Jordan decomposition $\mu_m = \mu_m^1 -\mu_m^2 $ where $\mu_m^i$ are positive measures whose total variation norm 
 satisfies $\|\mu_m^i\| \leq \|\mu_m\| \leq S$ and which are supported in $[0,1]^n.$ Using again Prohorov's theorem we obtain, after 
 passing to subsequences if necessary, that there exist positive bounded Radon measures $\mu^i$ on $\mathbb R^n$ with 
 $\text{supp}(\mu^i) \subseteq [0,1]^n$ and such that $\mu_m^i \to \mu ^i$ (weakly) as $m \to \infty.$ Therefore $\,\mu_m \to \mu:= \mu^1 - \mu^2$.
 Taking the limit $m\to \infty$ and employing  Lemma \ref{scaling} as well as formula \eqref{polylimes}, we obtain
 $$ J_{k_0^\prime}^B ({\bf 1}, it)  =    \int_{[0,1]^n} J_{k_0}^B(\xi,it) d\mu(\xi)   \quad \forall t \in \mathbb R^n,    $$
 with $k_0 = (k_1+k_2, k_3), k_0^\prime = (k_1+k_2+h_1+h_2, k_3).$  
 Again, Corollary \ref{main_Bessel} now implies that $h_1+h_2 \in \Sigma(k_3)$, a contradition.  
 \end{proof}

\begin{remark} Apart from the well-studied rank one case (see \cite{A2} for an overview) and the geometric cases described above, 
further nontrivial pairs of Heckman-Opdam polynomial families with nonnegative connection coefficients seem to be unknown. 
\end{remark}

\end{document}